\numberwithin{equation}{section}
\title[A note on semiorthogonal indecomposability of some Cohen-Macaulay varieties]{A note on semiorthogonal indecomposability of some Cohen-Macaulay varieties}
\begin{document}
\begin{abstract}
    In this short note, we observe that the criterion proven in \cite{KawataniOkawa-Nonexistence} for semiorthogonal indecomposability of the derived category of smooth DM stacks based on the canonical bundle can be extended to the case of projective varieties with Cohen-Macaulay singularities. As a consequence, all projective curves of positive arithmetic genus have weakly indecomposable bounded derived categories and indecomposable categories of perfect complexes. 
\end{abstract}


\author{Dylan Spence}

\address{
  Department of Mathematics\\
  Indiana University\\
  Rawles Hall\\
  831 East 3rd Street\\
  Bloomington, IN 47405\\
  USA
}
\email{dkspence@indiana.edu}

\subjclass[2010]{14F05}
\keywords{Derived categories, Reconstruction}
\maketitle
\setcounter{secnumdepth}{1}
\setcounter{tocdepth}{1}

\section{Introduction}


The bounded derived category of coherent sheaves $\D^b(X)$ on a projective variety has received a large amount of attention in the last 30 years or so. As in many areas of mathematics, it is often convenient and enlightening to ``decompose" the derived category into pieces. To be precise, for a projective variety $X$, we say that the derived category $\D^b(X)$ admits a semiorthogonal decomposition into two full triangulated subcategories $\A$ and $\B$ if there are no nonzero morphisms from objects of $\B$ to objects of $\A$, and the smallest full triangulated subcategory containing $\A$ and $\B$ is all of $\D^b(X)$. Of course, this definition also makes sense for any triangulated category $\T$ in place of $\D^b(X)$. If any semiorthogonal decomposition of a triangulated category $\T$ is trivial, that is, one of the components is zero, then we say that $\T$ is indecomposable.

In the geometric setting, semiorthogonal decompositions are more then just convenient tools for the study of $\D^b(X)$ and $\perf X$. For example there are a number of interesting conjectures relating the existence of certain semiorthogonal decompositions to the rationality of $X$ (see \cite{Kuznetsov-rationality} for a somewhat recent survey). Despite the interest however, semiorthogonal decompositions can be hard to produce without inspiration, however an interesting special case is when the derived category is indecomposable. This occurs, for example, for smooth varieties $X$ with $\omega_X \cong \O_X$, or for smooth curves of positive genus \cite{Okawa-semiorthogonal_decomposability_of_curves}.  

The most general indecomposability criteria to date was proven in \cite[Theorem 3.1]{KawataniOkawa-Nonexistence}. There, the authors show that any smooth projective variety (more generally, any smooth and proper DM stack with a non-stacky point) with small enough canonical base locus has an indecomposable derived category. In this note we observe that these techniques generalize, with suitable assumptions, to the case of Cohen-Macaulay projective varieties. 

To be more specific, let $X$ be a Cohen-Macaulay projective variety and $x\in X$ any closed point. The key notion in the proof is that of a \emph{Koszul zero-cycle}, which is a closed subscheme $Z_x$ of $X$ whose underlying topological space is just the closed point $\{x\}$, but whose structure sheaf $\O_{Z_x}$ is perfect (that is, a bounded complex of locally free sheaves of finite rank) as an object of $\D^b(X)$, and whose support is exactly $\{x\}$. These skycraper sheaves form a spanning class for both $\perf X$ and $\D^b(X)$, and are a replacement for the sheaves $k(x)$, which are not perfect if $x\in X$ is a singular point.

Our main result is an application of these objects, and generalizes \cite[Theorem 3.1]{KawataniOkawa-Nonexistence}.

\begin{theorem}\label{Main Theorem}
    Let $X$ be a Cohen-Macaulay projective variety with dualizing sheaf $\omega_X$. Suppose that $\perf X = \langle \A, \B \rangle$ is a semiorthogonal decomposition. Then:
  \begin{enumerate}
    \item for any closed point $x \in X \setminus \operatorname{Bs}|\omega_X|$, and all Kozsul zero-cycles $Z_x$ at $x$, exactly one of the following holds:
    \begin{enumerate}
      \item $\O_{Z_x} \in \A$, or 
      \item $\O_{Z_x} \in \B$.
    \end{enumerate}
    \item When (1a) (resp. (1b)) is satisfied, then the support of any object in $\B$ (resp. $\A$) is contained in $\operatorname{Bs}|\omega_X|$.
  \end{enumerate}
\end{theorem}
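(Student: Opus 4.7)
The plan is to adapt the proof of \cite[Theorem 3.1]{KawataniOkawa-Nonexistence} by substituting Koszul zero-cycles $\O_{Z_x}$ for the skyscraper sheaves $k(x)$, which are generally not perfect on singular $X$. Given the semiorthogonal decomposition, the perfect complex $\O_{Z_x}$ sits in a unique distinguished triangle
\[
B \to \O_{Z_x} \to A \to B[1]
\]
with $A \in \A$, $B \in \B$. Since $Z_x$ is Artinian local, $\End(\O_{Z_x}) = \O_{Z_x}$ is a local ring, so $\O_{Z_x}$ is indecomposable in $\D^b(X)$. Consequently, it suffices to show that this triangle splits, since splitting combined with indecomposability forces $A = 0$ or $B = 0$. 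Equivalently, the task is to show that the connecting map $\delta \colon A \to B[1]$ vanishes.

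The crucial input is a form of Serre self-duality for $\O_{Z_x}$ when $x \notin \operatorname{Bs}|\omega_X|$. Since $X$ is Cohen-Macaulay and $Z_x$ is a local complete intersection, $Z_x$ is Artinian Gorenstein with trivial relative dualizing sheaf, hence $\O_{Z_x} \otimes^{L} \omega_X$ is concentrated in degree zero and equal to the free rank-one $\O_{Z_x}$-module $\omega_X|_{Z_x}$. A section $s \in H^0(X, \omega_X)$ with $s(x) \neq 0$ descends to a generator of this module, inducing an isomorphism $\O_{Z_x} \xrightarrow{\sim} \O_{Z_x} \otimes^{L} \omega_X$. Combined with Serre duality on the CM variety $X$, namely the pairing $\Hom(E,F) \cong \Hom(F, E \otimes^{L} \omega_X[n])^{*}$ for $E \in \perf X$ and $F \in \D^b(X)$, this produces a self-pairing $\Hom(\O_{Z_x}, F) \cong \Hom(F, \O_{Z_x}[n])^{*}$. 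Using the triangle $A \to A \otimes^{L} \omega_X \to A \otimes^{L} \operatorname{cone}(s)$ induced by $s$, together with the semiorthogonality $\Hom(B, A[i]) = 0$ for all $i$, one argues that $\Hom(B, A \otimes^{L} \omega_X[n-1])$ vanishes, which via Serre duality is isomorphic to $\Hom(A, B[1])^{*}$; hence $\delta = 0$ and part~(1) is proved.

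For part~(2), suppose case (1a) holds at some $x \notin \operatorname{Bs}|\omega_X|$, so $\O_{Z_x} \in \A$. Then $\Hom(B, \O_{Z_x}[i]) = 0$ for all $i$ and all $B \in \B$. For perfect $B$, the vanishing of these groups against every Koszul zero-cycle at $x$ forces the stalk $B_x$ to be acyclic, as such zero-cycles form a spanning class detecting the cohomology of perfect complexes pointwise. Letting $x$ range over $X \setminus \operatorname{Bs}|\omega_X|$ gives $\operatorname{supp}(B) \subset \operatorname{Bs}|\omega_X|$; case (1b) is entirely symmetric.

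The main obstacle is the intermediate claim $\Hom(B, A \otimes^{L} \omega_X[n-1]) = 0$ in part~(1). While semiorthogonality handles $\Hom(B, A[i])$ directly, the tensor product with $\omega_X$ can move $A$ out of $\A$, so one cannot invoke orthogonality without further input. The delicate step is to control the contribution of $\operatorname{cone}(s \colon \O_X \to \omega_X)$, which is supported on the divisor $\{s = 0\}$ avoiding $x$, and to show that this piece is also orthogonal to $\B$ at the relevant shift.
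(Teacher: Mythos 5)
The gap you flag at the end of part (1) is genuine and cannot be closed by the route you propose. From the triangle $A \to A\tens{\L}\omega_X \to A\tens{\L}\operatorname{cone}(s)$ and the semiorthogonal vanishing $\hom(B,A[i])=0$, what you actually obtain is an isomorphism $\hom(B,A\tens{\L}\omega_X[n-1])\cong\hom(B,A\tens{\L}\operatorname{cone}(s)[n-1])$, and the latter group has no reason to vanish: $A\tens{\L}\operatorname{cone}(s)$ is supported on $V(s)$ and bears no orthogonality relation to $\B$, since tensoring an object of $\A$ by a complex does not keep it in $\A$. Moreover your target $\hom(A,B[1])=0$ is strictly stronger than what is needed; the paper never proves it directly. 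It applies the Rouquier duality with the roles of $\A$ and $\B$ reversed: the composite $A \overset{f}{\to} B[1] \overset{1\otimes s}{\to} B[1]\tens{}\omega_X$ lives in $\hom(A,B\tens{}\omega_X[1])\cong\hom(B,A[\dim X-1])^*$, which vanishes \emph{for free} by semiorthogonality. This only gives $(1\otimes s)\circ f=0$, not $f=0$; one then restricts to an open $U\subset D(s)^{\circ}$ containing $x$, where $s$ is nonvanishing, to get $f|_U=0$, splits the restricted triangle so that one of $A|_U$, $B|_U$ is zero, and upgrades this to a global conclusion using that $\operatorname{supp}\O_{Z_x}=\{x\}\subset U$: if $A|_U=0$ then $\O_{Z_x}\to A$ vanishes because the supports are disjoint, so $B\cong\O_{Z_x}\oplus A[-1]$, and the projection $B\to A[-1]$ must vanish by semiorthogonality, forcing $A=0$. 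You should replace your duality step with this argument.

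Two further problems. First, your claim that $Z_x$ is Artinian Gorenstein with $\O_{Z_x}\tens{\L}\omega_X$ free of rank one fails when $\O_{X,x}$ is Cohen--Macaulay but not Gorenstein: $Z_x$ is cut out by a regular sequence inside $X$, not inside a regular ambient scheme, and the quotient of a non-Gorenstein Cohen--Macaulay local ring by a regular sequence is never Gorenstein; the module $\omega_X/\mathbf{x}\omega_X$ then requires more than one generator, so the section $s$ cannot generate it. The self-duality actually available (and used in the paper) is $\O_{Z_x}^{\vee}\cong\O_{Z_x}[-\dim X]$ coming from the Koszul complex. Second, in part (2) you fix one point where (1a) holds and then let $x$ range over $X\setminus\operatorname{Bs}|\omega_X|$, but you have not shown that the \emph{same} alternative holds at every such point; a priori some Koszul zero-cycles could lie in $\A$ and others in $\B$, in which case neither component would have all its objects supported in $\operatorname{Bs}|\omega_X|$. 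The paper excludes this by applying the decomposition triangle to $\O_X$: the support of $\O_X$ is the union of the supports of its two components, and if both $\A$ and $\B$ contained Koszul zero-cycles then both components would have proper support, contradicting the irreducibility of $X$.
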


Along the way, we make sense of what the base locus of $\omega_X$ is, given that in the Cohen-Macaulay setting, it is not a line bundle. Combining this result with some auxiliary statements regarding the support of semiorthogonal components, we are then able to apply this to curves, yielding the following.

\begin{corollary}
  Let $C$ be a projective curve of arithmetic genus at least one. Then $\perf C$ is indecomposable and $\D^b(C)$ is weakly indecomposable.
\end{corollary}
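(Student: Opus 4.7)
The plan is to apply Theorem~\ref{Main Theorem} to the curve $C$ and then use a support-theoretic argument rooted in the Cohen-Macaulay property to force one component of any semiorthogonal decomposition to vanish.

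Any projective curve $C$ is Cohen-Macaulay (having no embedded associated primes in dimension one), so Theorem~\ref{Main Theorem} applies. By Serre duality for Cohen-Macaulay schemes, $h^0(\omega_C) = h^1(\O_C) = p_a(C) = g \geq 1$, so $\omega_C$ admits a nonzero global section, and therefore $\operatorname{Bs}|\omega_C|$ is a proper closed subscheme of $C$, necessarily supported on a finite set $\{x_1, \ldots, x_n\}$. Suppose now that $\perf C = \langle \A, \B \rangle$ is a semiorthogonal decomposition. Picking any $x \in C \setminus \operatorname{Bs}|\omega_C|$ and relabeling $\A,\B$ if necessary, Theorem~\ref{Main Theorem} gives $\O_{Z_x} \in \A$ and $\operatorname{supp}(\B) \subseteq \{x_1, \ldots, x_n\}$.

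The core step, and where I expect the main obstacle, is upgrading this zero-dimensional support statement to $\B = 0$. The plan has three layers. First, for any line bundle $L$ on $C$ (in particular $L = \O_C$), Cohen-Macaulayness of $L$ forces $\operatorname{Hom}(G, L) = 0$ for any $G \in \perf C$ of zero-dimensional support, since a zero-dimensional sheaf admits no nonzero map to a locally free sheaf on a one-dimensional Cohen-Macaulay scheme (depth considerations). Applied to the canonical triangle $F_\A \to L \to F_\B$ produced by the decomposition, the semiorthogonality $\operatorname{Hom}^{\bullet}(F_\B, F_\A) = 0$ yields $\operatorname{Hom}(F_\B, F_\B) \cong \operatorname{Hom}(F_\B, L) = 0$, whence $F_\B = 0$ and $L \in \A$. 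Second, for each base point $x_i$ I construct a Koszul zero-cycle as the cone of a section $s\colon \O_C \to L$ of a sufficiently ample line bundle $L$ whose vanishing locus is set-theoretically $\{x_i\}$ (existence follows from lifting a non-zero-divisor in $\mathfrak{m}_{x_i}$ to a global section); since both $\O_C$ and $L$ lie in $\A$, so does $\O_{Z_{x_i}}$. Third, every perfect complex supported on $\{x_1, \ldots, x_n\}$ lies in the thick closure of the $\O_{Z_{x_i}}$ (a local statement on the one-dimensional Cohen-Macaulay local rings $\O_{C,x_i}$), hence in $\A$; combined with semiorthogonality this forces $\B \subseteq \A \cap \B = 0$.

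Finally, for the weak indecomposability of $\D^b(C)$, I reduce to the $\perf C$ case: a semiorthogonal decomposition of $\D^b(C)$ by admissible subcategories restricts to one of $\perf C$ by taking the perfect objects in each factor, and the indecomposability of $\perf C$ established above then forces one of the perfect parts, hence the whole corresponding component, to vanish.
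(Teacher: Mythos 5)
Your overall architecture (reduce to finiteness of $\operatorname{Bs}|\omega_C|$, then kill the component supported on finitely many points) matches the paper's, and your cheaper argument for finiteness of the base locus --- $h^0(\omega_C)=p_a(C)\geq 1$ together with torsion-freeness of $\omega_C$ --- is fine (the paper instead shows via Riemann--Roch that regular points are never base points). But the ``core step'' contains a genuine gap. Your first layer asserts that $\hom_{\D^b(C)}(G,L)=0$ for every $G\in\perf C$ with zero-dimensional support and every line bundle $L$. The depth argument you cite only shows that a torsion \emph{sheaf} admits no nonzero map to a torsion-free sheaf; it says nothing about complexes, and for complexes the claim is false: taking $G=\O_{Z_x}[-1]$ one has $\hom(G,\O_C)=\ext^1_{\O_C}(\O_{Z_x},\O_C)\cong H^0(C,\O_{Z_x})\neq 0$, because $\R\shom_{\O_C}(\O_{Z_x},\O_C)\cong\O_{Z_x}[-1]$ on a curve (and more generally $\ext^1(T,L)\neq 0$ for any nonzero torsion sheaf $T$). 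The component $F_{\B}$ of $L$ is an arbitrary object of $\B$ with no a priori bound on its cohomological amplitude, so the degree-zero vanishing $\hom(F_{\B},L)=0$ --- exactly what you need to kill $\mathrm{id}_{F_{\B}}$ --- does not follow from its support being zero-dimensional. Since your second and third layers are bootstrapped from ``every line bundle lies in $\A$,'' the whole chain collapses at this point.

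The repair is essentially the paper's Lemma \ref{lem_perf support}: for perfect complexes $F,G$ whose supports share an isolated point, the \emph{total} $\R\hom(F,G)$ is nonzero in \emph{some} degree, which is all one can expect but also all one needs, since semiorthogonality kills $\hom(B,A[i])$ for every $i$. Concretely, decompose $\O_C$ to produce an object of the large component with full support and pair it against any nonzero object of the component supported on $\operatorname{Bs}|\omega_C|$; this contradicts semiorthogonality directly and replaces all three of your layers. In particular it avoids your third layer's claim that every perfect complex supported at a point lies in the thick closure of a single Koszul zero-cycle, which is true but is a nontrivial Hopkins--Neeman/Thomason-type statement you would need to justify. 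Finally, your reduction of the $\D^b(C)$ statement to the $\perf C$ statement silently uses that the Koszul zero-cycles form a spanning class in order to pass from $\B\cap\perf C=0$ to $\B=0$; that is Lemma \ref{lem_detect_support} and should be invoked explicitly.
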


\noindent Here weakly indecomposable refers to the nonexistence of semiorthogonal decompositions where the components are admissible (see Definition \ref{def_admissible}).

\subsection{Acknowledgements}
The author wishes to thank Valery Lunts for his support, encouragement, and advice. The author also wishes to thank A. Thomas Yerger for helpful discussions. 

\subsection{Conventions}
Throughout we work over a fixed algebraically closed field $k$. In addition, unless explicitly stated otherwise, any scheme or variety $X$ is assumed to be a projective variety; in particular for us this means an integral separated scheme of finite type over $\spec k$, such that $X$ admits a closed immersion into some projective space. A curve refers to a projective variety of dimension one. 

By $\D^*(X)$ we always mean the derived category of complexes of coherent sheaves respectively, with appropriate decoration $* \in \{\phantom{0},-,+,b\}$ meaning unbounded, bounded above, below, and bounded respectively. In particular, we identify $\D^b(X)$ with the full subcategory of $\D(\qcoh X)$ consisting of complexes of quasi-coherent sheaves with bounded coherent cohomology sheaves. By $\perf X$ we mean the full triangulated subcategory of $\D(\qcoh X)$ consisting of complexes which are isomorphic (in $\D(\qcoh X)$) to a bounded complex of vector bundles. 
\section{Preliminaries}

\subsection{Base loci for coherent sheaves}

To formulate our results, we need to define what we mean by the base locus of a sheaf. Let $X = \cup_j Y_j$ be a reduced quasi-projective scheme of finite type over $k$ (that is, $X$ is the finite union of the quasi-projective varieties $Y_j$), $\mathcal{F}$ a coherent sheaf on $X$, and $s \in H^0(X,\mathcal{F})$ a global section. We define the set $$D(s) = \{x\in X \text{ closed}\, | \, 0 \neq s(x) \in \mathcal{F}(x) \},$$ where $\mathcal{F}(x)$ is the fiber of $\mathcal{F}$. We denote the complement as $V(s) = D(s)^c$, which we refer to as the zero set of $s$.

Suppose for the sake of illustration that $X$ was integral (so that by our conventions it is a quasi-projective variety), and that $\mathcal{F}$ is locally free. Then the definition of $V(s)$ and $D(s)$ obviously agree with the corresponding definitions for the zero set (respectively, its complement) of a section of a vector bundle. In this setting, so long as the section is nonzero, the sets are closed (respectively, open), but if $\F$ is not a vector bundle, this is no longer true. Instead, these sets are constructible.

Recall that a locally closed subset of $X$ is the intersection of an open and closed subset. A set is said to be \emph{constructible in $X$} if it is a finite union of locally closed subsets of $X$. Constructible sets are closed under finite unions, finite intersections, and complements.

\begin{lemma}\label{lem_constructible}
  Let $X = \cup_j Y_j$ be a reduced quasi-projective scheme of finite type over $k$, $\mathcal{F}$ a coherent sheaf on $X$, and $s \in H^0(X,\mathcal{F})$ a global section. Then $D(s)$ is a constructible subset of $X$.
\end{lemma}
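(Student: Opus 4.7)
The plan is to reduce to the locally free case by a stratification of $X$, after which the statement follows from the familiar fact that a section of a vector bundle vanishes on a closed set.

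First, I would construct a finite stratification $X = \bigsqcup_{i=1}^n X_i$ by locally closed subsets (each equipped with the reduced induced structure) such that $\F|_{X_i}$ is locally free for every $i$. This follows by Noetherian induction from the standard fact that on a reduced Noetherian scheme, every coherent sheaf is locally free on a dense open subset: one applies the fact to $X$ to get a dense open $U \subseteq X$ on which $\F$ is locally free, and then applies the inductive hypothesis to the reduced closed complement $X \setminus U$.

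Next, I would identify $D(s) \cap X_i$ with the analogously defined set $D(s|_{X_i})$ for the restricted section. Because fibers are preserved under restriction to a locally closed subscheme, namely $(\F|_{X_i})(x) \cong \F(x)$ canonically for every closed point $x \in X_i$, this identification is immediate and matches $s(x)$ with $s|_{X_i}(x)$. Now since $\F|_{X_i}$ is locally free, a local trivialization exhibits $s|_{X_i}$ as a tuple of regular functions on an open neighborhood, so $D(s|_{X_i})$ is the complement in $X_i$ of their common vanishing locus, hence open in $X_i$. Because $X_i$ is locally closed in $X$, the set $D(s) \cap X_i$ is locally closed in $X$, and therefore $D(s) = \bigsqcup_{i=1}^n (D(s) \cap X_i)$ is a finite union of locally closed subsets, hence constructible.

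The only step that requires any real content is the existence of the stratification, but this is a routine consequence of generic local freeness combined with Noetherian induction on the reduced Noetherian scheme $X$. Everything else in the argument is a direct unwinding of definitions, so I do not anticipate any genuine obstacle.
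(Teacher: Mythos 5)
Your proof is correct and takes essentially the same approach as the paper: both arguments rest on generic local freeness of a coherent sheaf on a reduced Noetherian scheme, combined with an induction (Noetherian induction in your case, induction on dimension with a reduction to irreducible components in the paper's) to produce locally closed strata on which $D(s)$ is open. The resulting decomposition of $D(s)$ into finitely many locally closed pieces is the same in substance.
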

\begin{proof}
  We proceed by induction on the dimension of $X$. Clearly, if $X$ is a finite number of points, then the set $D(s)$ is a finite disjoint union of points (possibly empty), and so is constructible.

  Now if $\dim X \geq 0$, let $Y \subset X$ be an irreducible component of positive dimension. Then via pullback along $Y \hookrightarrow X$, we assume for now that $X$ is integral and $\mathcal{F}$ is a coherent sheaf on $X$. By well-known facts, there is an open dense subset $j:U\hookrightarrow X$ such that $\mathcal{F}$ is locally free of non-negative rank. Denoting $s|_U := j^* s$, we see $D(s|_U) = D(s) \cap U$ is constructible in $X$. Indeed, $D(s|_U)$ is open in $U$, since $j^*\mathcal{F}$ is locally free, and thus $U \setminus D(s|_U)$ is closed in $U$, and so is of the form $V \cap U$ for some closed subset $V$ of $X$, and so $(X \setminus V) \cap U = D(s|_U)$ is an open subset of $X$, hence constructible. 
  
  Now, still assuming $X$ is integral, the closed subset $X \setminus U = Z$ is a finite union of subvarieties (after possibly giving them their natural reduced subscheme structure) of lower dimension.  Let $i:Z \hookrightarrow X$ be the closed immersion and set $s|_{Z} : = i^* s$ as before. Then by the induction hypothesis, the set $D(s|_{Z}) = D(s) \cap Z$ is constructible and we see that $D(s) = D(s) \cap U \coprod D(s) \cap Z$ is constructible. 

  Now if $X = \cup_j Y_j$ is not integral with finitely many irreducible components $Y_j$, $D(s)$ is the union of its intersections with each $Y_j$, which we have shown above are each constructible. Using that constructible sets are closed under finite unions we have that $D(s)$ is constructible.
\end{proof}

\begin{remark}\label{rem_denseopen}
  Let $X$ be as above, $\mathcal{F}$ a torsion-free coherent sheaf on $X$, $s$ a global section of $\mathcal{F}$, and $Y \subset X$ is an irreducible component where $s|_Y$ is nonzero (so that $D(s|_Y)$ is not empty), then $D(s)$ is dense in $Y$. This follows as on $Y$, there is a dense open subset $U\subset Y$ where $\mathcal{F}$ is locally free, and by elementary topology, any dense subset of $U$ will also be dense in $Y$. Since $D(s|_U)$ is dense in $U$, this proves the claim. Further, since $D(s|_Y)$ is constructible and dense for $s$ nonzero, it will contain a dense open subset of $Y$, as given any topological space, a constructible subset always contains a dense open subset of its closure (see for example \cite[Lemma 2.1]{An-Rigid_geometric}). 
\end{remark}

With the above in mind, we make the following definition.

\begin{definition}
  Let $X$ be as in Lemma \ref{lem_constructible}. Given a coherent sheaf $\mathcal{F}$ on $X$, we define the base locus of $\mathcal{F}$ to be $$\operatorname{Bs}|\mathcal{F}| = \bigcap_{s\in H^0(X,\mathcal{F})} \overline{V(s)}.$$ The sheaf $\mathcal{F}$ is then base point free if this intersection is empty, or equivalently, if the interiors of the sets $D(s)$, ranging over $H^0(X,\mathcal{F})$, are an open cover of $X$.
\end{definition}

\noindent Note that by definition, the base locus of a sheaf is always a closed subset of $X$.

In the case of curves however, things simplify nicely.

\begin{lemma}
  Let $\F$ be a torsion-free sheaf on an curve $C$. Then $D(s)$ is open for any global section $s\in H^0(X,\mathcal{F})$.
\end{lemma}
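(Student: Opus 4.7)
The plan is to show that $V(s) := C \setminus D(s)$ is finite, and therefore closed. The case $s = 0$ is immediate since then $D(s) = \emptyset$ is open, so I will assume $s$ is a nonzero global section.

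First, I would observe that the morphism $\phi_s \colon \mathcal{O}_C \to \mathcal{F}$ sending $1 \mapsto s$ is injective. Its kernel is a coherent ideal sheaf of $\mathcal{O}_C$ which vanishes at the generic point (using torsion-freeness of $\mathcal{F}$ and $s \ne 0$), so it must be zero because $\mathcal{O}_C$ is a sheaf of integral domains on the integral curve $C$. Setting $\mathcal{Q} := \operatorname{coker}(\phi_s)$, the resulting short exact sequence
$$0 \to \mathcal{O}_C \xrightarrow{\phi_s} \mathcal{F} \to \mathcal{Q} \to 0$$
shows that $\mathcal{Q}$ has generic rank $\operatorname{rk}(\mathcal{F}) - 1$, so its torsion subsheaf $\mathcal{T}(\mathcal{Q})$ is supported on a finite set $S$ of closed points of $C$.

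The key step is to show $V(s) \subseteq \Sigma \cup S$, where $\Sigma$ denotes the (finite) singular locus of $C$. Given a smooth point $y \in C$ with $y \notin S$, the stalk $\mathcal{F}_y$ is free over the DVR $\mathcal{O}_{C,y}$ (torsion-free modules over DVRs are free), and similarly $\mathcal{Q}_y$ is free since it is torsion-free at $y \notin S$. Applying $- \otimes_{\mathcal{O}_{C,y}} k(y)$ to the sequence above and using the long exact sequence for Tor produces
$$\operatorname{Tor}_1^{\mathcal{O}_{C,y}}(\mathcal{Q}_y, k(y)) \to k(y) \xrightarrow{\phi_s(y)} \mathcal{F}(y) \to \mathcal{Q}(y) \to 0;$$
freeness of $\mathcal{Q}_y$ forces the Tor term to vanish, so $\phi_s(y)$ is injective and $s(y) \ne 0$, i.e.\ $y \notin V(s)$. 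Hence $V(s) \subseteq \Sigma \cup S$, a finite set, so $V(s)$ is closed and $D(s)$ is open.

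The main obstacle I anticipate is the analysis at singular points of $C$: there $\mathcal{F}_y$ need not be free, and the Tor sequence above no longer gives a clean characterization of when $s(y) = 0$. This is sidestepped by the fact that $\Sigma$ is finite, so such problematic points can simply be absorbed into the finite exception set without any further effort.
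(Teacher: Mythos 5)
Your proof is correct, but it takes a genuinely different route from the paper's. The paper gets the statement almost for free from machinery it has already built: $D(s)$ is constructible by Lemma \ref{lem_constructible}, it contains a dense open subset when nonempty by Remark \ref{rem_denseopen} (this is where torsion-freeness enters for the paper), and a constructible subset of an irreducible curve is either finite or cofinite, so a dense constructible set is open. You instead argue directly that $V(s)$ is finite: you use torsion-freeness to embed $\mathcal{O}_C \hookrightarrow \mathcal{F}$ via $s$, and then control the degeneration locus of the fiber map through $\operatorname{Tor}_1$ of the cokernel, absorbing the singular locus and the support of the torsion of $\mathcal{Q}$ into a finite exceptional set. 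Your argument is elementary and self-contained, and it yields slightly more -- an explicit finite set $\Sigma \cup S$ containing $V(s)$ -- whereas the paper's is a two-line corollary of results it needs anyway for the higher-dimensional definition of the base locus; the trade-off is that your DVR/Tor argument is genuinely one-dimensional and would not generalize, while the constructibility route is what powers the rest of the paper. One small point of care: your reduction to smooth points silently uses that the singular locus of a variety over an algebraically closed (hence perfect) field is a proper closed, hence finite, subset of $C$; that is true but worth saying, since it is the only reason $\Sigma$ can be ``absorbed.''
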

\begin{proof}
  This follows from the fact that any constructible subset of a one dimensional variety is either open or closed (and hence a finite number of points), and if $D(s)$ is nonempty, it contains a dense open subset by Remark \ref{rem_denseopen}. If $D(s)$ is empty then we are done.
\end{proof}

\subsection{Cohen-Macaulay schemes}
 In the following we will only need the most basic facts about Cohen-Macaulay rings, a more thorough treatment can be found in \cite{BrunsHerzog-CM_rings}. Let $R$ be a commutative noetherian local ring with maximal ideal $\mathfrak{m}$, and let $M$ be a finitely generated $R$-module. A sequence of elements $(x_1,...,x_n) \subset \mathfrak{m}$ in a noetherian local ring $R$ is called a $M$-regular sequence if $x_i$ is a non-zero divisor on $M/(x_1,...,x_{i-1})M$ for all $i \leq n$ and $M/(x_1,...,x_n)M \neq 0$. The length of any maximal regular sequence is constant and is called the depth of $M$, denoted by $\operatorname{depth}M$. The depth should be thought of as an algebraic notion of dimension; in general the depth and dimension do not agree, but they are be related via $\operatorname{depth} M \leq \dim M$, where the dimension of a module is the dimension of its support as a sheaf on $\spec R$. Further, if $M$ has finite projective dimension over $R$, there is a close relationship between the depth of $R$ (as a module over itself) and the projective dimension of of $M$; $$\operatorname{proj.dim.} M + \operatorname{depth} M = \operatorname{depth} R,$$ known as the Auslander-Buchsbaum formula \cite[Theorem 1.3.3]{BrunsHerzog-CM_rings}.  Finally, a module is said to be a Cohen-Macaulay module if $\operatorname{depth} M = \dim M$, and a ring is Cohen-Macaulay if it is so as a module over itself. We can globalize this, namely a locally noetherian scheme $X$ is said to be a Cohen-Macaulay scheme if $\O_{X,x}$ is a Cohen-Macaulay ring for all $x\in X$. Our primary example is when $R$ is a reduced local ring of dimension one, where such rings are automatically Cohen-Macaulay (this is \cite[Exercise 2.1.20(a)]{BrunsHerzog-CM_rings}). Since the definition of a Cohen-Macaulay scheme is local, it follows that any reduced curve is Cohen-Macaulay. Cohen-Macaulay varieties can be singular, and any regular or Gorenstein variety is automatically Cohen-Macaulay.

 Suppose we are given a noetherian local ring $R$ which is Cohen-Macaulay. The ideal generated by a maximal regular sequence $\mathbf{x} = (x_1,...,x_n)$, $n =\dim R$, satisfies $\dim R/ (x_1,...,x_n) =0$ and the associated Koszul resolution $K(\mathbf{x})$ \cite[Section 1.6]{BrunsHerzog-CM_rings} gives a finite free resolution of the quotient. Further, in $\D^b(R)$, the bounded derived category of finitely-generated $R$-modules, the Koszul resolution satisfies $K(\mathbf{x})^\vee \cong K(\mathbf{x})[-\dim R]$ \cite[Proposition 1.6.10]{BrunsHerzog-CM_rings}. 
 
Now we give a definition orginally given in \cite[Section 1.3]{Ruiperez-Fourier-Mukai_transforms_for_Gorenstein_schemes}. 

\begin{definition}
  Let $X$ be a Cohen-Macaulay scheme. Given any closed point $x\in X$, the local ring $\O_{X,x}$ is a Cohen-Macaulay local ring, and hence has a regular sequence of length $\dim X$. A Koszul zero-cycle at $x$ is the closed subscheme $Z_x$ defined by the quotient of $\O_{X,x}$ by the ideal $\mathcal{I}_x$ generated by this regular sequence. 
\end{definition}
Clearly the structure sheaf $\O_{Z_x}$ of a Koszul zero-cycle $Z_x$ is a coherent sheaf supported only at the closed point $x\in X$, and is an object of $\perf X$ via the Koszul resolution. By the discussion above in the local case, we have that $\R\shom_{\O_X} (\O_{Z_x},\O_X) := \O_{Z_x}^\vee \cong \O_{Z_x}[-\dim X]$ in $\D^b(X)$.

Since we mainly have singular varieties in mind for applications, it is worth noting that the Koszul zero-cycles are a suitable replacement for the structure sheaves of closed points, for the main reason that they form a spanning set for both $\perf X$ and $\D^b(X)$. Recall that a collection of objects $\Omega$ in a triangulated category $\mathcal{T}$ is said to be spanning if for all $T\in \mathcal{T}$, $\hom(T,A[i])=0$ for all $i \in \Z$ and $A \in \Omega$ implies $T=0$, and $\hom(A,T[i])=0$ for all $i \in \Z$ and $A \in \Omega$ implies $T=0$. 

Before we prove that these objects are a spanning set, we include the following elementary lemma for the reader's convenience.

\begin{lemma}\label{lemma_local maps}
  Let $(R,\mathfrak{m},k)$ be a local noetherian ring and $M$ a finitely generated module. If $\mathfrak{m} \in \operatorname{supp} M$, $M$ admits a surjection $M \to k$. Further if $\{\mathfrak{m}\} = \operatorname{supp}M$, then $M$ also admits an injection $k \to M$.
\end{lemma}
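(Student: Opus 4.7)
The plan is to handle the two claims separately, with the first being essentially Nakayama and the second being a standard associated-prime argument.

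For the first statement, I would observe that the hypothesis $\mathfrak{m}\in\operatorname{supp} M$ simply says $M_{\mathfrak{m}} = M$ is nonzero. By Nakayama's lemma, the fiber $M/\mathfrak{m}M$ is then a nonzero finite-dimensional $k$-vector space, and hence admits a $k$-linear (and so $R$-linear) surjection onto $k$. Composing with the canonical projection $M \twoheadrightarrow M/\mathfrak{m}M$ gives the desired surjection $M \twoheadrightarrow k$.

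For the second statement, the cleanest route is through associated primes. Since $M$ is a nonzero finitely generated module over the noetherian ring $R$, the set $\operatorname{Ass}(M)$ is nonempty and contained in $\operatorname{supp}(M)$. Under the stronger hypothesis $\operatorname{supp}(M) = \{\mathfrak{m}\}$, this forces $\mathfrak{m} \in \operatorname{Ass}(M)$, which by definition means there is some $m \in M$ with $\operatorname{ann}(m) = \mathfrak{m}$. The map $R \to M$ defined by $1 \mapsto m$ then descends to an injection $k = R/\mathfrak{m} \hookrightarrow M$.

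If one prefers to avoid invoking the theory of associated primes, an equally short alternative is as follows: since $M$ is finitely generated and $\operatorname{supp}(M) = V(\operatorname{ann}(M)) = \{\mathfrak{m}\}$, the annihilator of $M$ contains some power $\mathfrak{m}^n$. Let $n$ be minimal with $\mathfrak{m}^n M = 0$; then $\mathfrak{m}^{n-1}M \neq 0$, and any nonzero element $m$ in it satisfies $\mathfrak{m}m \subseteq \mathfrak{m}^n M = 0$, so $Rm$ is a nonzero cyclic quotient of $R/\mathfrak{m} = k$, hence equal to $k$. Either way, there are no real obstacles here — the statement is really just a reminder that modules supported at $\mathfrak{m}$ have a socle — so I would include only the short associated-prime version in the text.
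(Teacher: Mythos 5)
Your proof is correct and follows essentially the same route as the paper: Nakayama's lemma plus projection onto a one-dimensional quotient of $M/\mathfrak{m}M$ for the surjection, and the nonemptiness of $\operatorname{Ass}(M)$ together with $\operatorname{Ass}(M)\subseteq\operatorname{supp}(M)$ to force $\mathfrak{m}$ to be an associated prime for the injection. The alternative socle argument via $\mathfrak{m}^{n}M=0$ is a fine elementary substitute but is not needed.
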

\begin{proof}
  If $M$ is finitely generated and supported at the maximal ideal, then Nakayama's lemma implies that $M/\mathfrak{m}M \neq 0$, and further is a finite dimensional $k$-vector space. Choosing any further projection to a one-dimensional subspace, we have a surjection $M \to k$. 

  Consider a prime ideal $\mathfrak{p} \subset R$. Recall that $\mathfrak{p}$ is said to be associated to $M$ if there exists an element $m \in M$ such that $\mathfrak{p} = \operatorname{Ann}_R(m)$. Equivalently, if and only if $R/\mathfrak{p}$ is isomorphic to a submodule of $M$ . Further, since $R$ is noetherian, standard results in commutative algebra (\cite[\href{https://stacks.math.columbia.edu/tag/0586}{Lemma 0586},\href{https://stacks.math.columbia.edu/tag/0587}{Lemma 0587}]{stacks-project}) imply that the set of associated primes is always nonempty and is a subset of the support of $M$. Now if $\{\mathfrak{m}\} = \operatorname{supp}M$, it follows that $\mathfrak{m}$ is an associated prime to $M$, and hence there is a submodule of $M$ which is isomorphic to $R/m \cong k$. The inclusion of submodules $k \to M$ yields the claim.
\end{proof}
 
For the following lemma, recall that the support of a complex in $\D^b(X)$ is by definition the union of the supports of its cohomology sheaves. It is a closed subset of $X$, possibly non-reduced. We will always work with the induced reduced subscheme structure when necessary.

\begin{lemma}\label{lem_detect_support}
  Let $X$ be a Cohen-Macaulay quasi-projective variety, $x\in X$ be a closed point, $Z_x$ a Koszul zero cycle supported at $x$, and $Q \in \D^b(X)$. Then $x\in \operatorname{supp} Q$ if and only if there is some $i\in \Z$ such that $\hom(Q,\O_{Z_x}[i]) \neq 0$ if and only if there is some $j \in \Z$ such that $\hom(\O_{Z_x},Q[j]) \neq 0$.
\end{lemma}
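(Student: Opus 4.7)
The plan is to reduce to a local computation at $x$ and then handle the two directions with different tools. Set $R = \O_{X,x}$ with maximal ideal $\mathfrak{m}$ and residue field $k$, put $n = \dim R$, and let $I \subset R$ be the ideal generated by the regular sequence defining $Z_x$, so that $\O_{Z_x,x} \cong R/I$. Because $\O_{Z_x}$ has set-theoretic support $\{x\}$, both $\R\shom_{\O_X}(Q,\O_{Z_x})$ and $\R\shom_{\O_X}(\O_{Z_x},Q)$ are supported at $x$, and hence their global cohomology agrees with the stalks at $x$. This gives $\hom(Q,\O_{Z_x}[i]) \cong \Ext^i_R(Q_x,R/I)$ and $\hom(\O_{Z_x},Q[j]) \cong \Ext^j_R(R/I,Q_x)$. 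If $x\notin\operatorname{supp} Q$ then $Q_x = 0$ and both families of groups vanish, so the task is to produce nonzero classes in both families when $Q_x \neq 0$.

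For maps into $\O_{Z_x}$, let $i_0$ be the largest integer with $H := H^{i_0}(Q_x) \neq 0$. Lemma \ref{lemma_local maps} supplies a surjection $H \twoheadrightarrow k$ (since $H$ is finitely generated and nonzero) and an injection $k \hookrightarrow R/I$ (since $\operatorname{supp} R/I = \{\mathfrak{m}\}$); let $\varphi\colon H \to R/I$ be their composite. The composite $Q_x \to H[-i_0] \xrightarrow{\varphi[-i_0]} R/I[-i_0]$ in $\D^b(R)$, whose first arrow is the canonical truncation $Q_x \to \tau^{\geq i_0} Q_x \cong H[-i_0]$, induces $\varphi$ on the top cohomology and is therefore nonzero, giving $\Ext^{-i_0}_R(Q_x, R/I) \neq 0$.

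For maps out of $\O_{Z_x}$, invoke the Koszul self-duality $(R/I)^\vee \cong R/I[-n]$ recalled in the preliminaries, together with the identity $\R\operatorname{Hom}_R(P,M) \cong P^\vee \otimes^L_R M$ valid for perfect $P$, to identify $\R\operatorname{Hom}_R(R/I,Q_x) \cong (R/I \otimes^L_R Q_x)[-n]$. It then suffices to show $R/I \otimes^L Q_x \neq 0$. Apply $R/I \otimes^L_R -$ to the truncation triangle $\tau^{<i_0} Q_x \to Q_x \to H[-i_0] \xrightarrow{+1}$: the third term has cohomology $H/IH$ in degree $i_0$, nonzero by Nakayama since $I \subseteq \mathfrak{m}$, while a routine induction on the number of nonzero cohomologies of a complex $N$ shows that $R/I \otimes^L N$ has no cohomology strictly above the top of $N$, so the first term vanishes in degrees $\geq i_0$. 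The long exact sequence in degree $i_0$ then yields $H^{i_0}(R/I \otimes^L Q_x) \cong H/IH \neq 0$.

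The main technical point is this second direction, where Koszul perfectness combines with a top-cohomology argument to ensure that the Nakayama contribution actually survives the long exact sequence. The first direction, by contrast, is essentially formal once the nonzero map $H \to k \hookrightarrow R/I$ has been assembled from the pieces already in the section.
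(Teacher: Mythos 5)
Your proof is correct and follows essentially the same route as the paper: both directions are detected in the top cohomological degree of $Q$ at $x$, using Lemma \ref{lemma_local maps} to build the map into $\O_{Z_x}$ and the Koszul self-duality $\O_{Z_x}^\vee\cong\O_{Z_x}[-\dim X]$ plus Nakayama for the maps out of it. The only difference is presentational: you reduce to the stalk over $\O_{X,x}$ and replace the paper's two hyperext spectral sequences by truncation triangles, which isolates exactly the surviving corner terms ($E_2^{0,-q_0}$ and $E_2^{\dim X,q_0}$) of the paper's argument.
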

 \begin{proof}
  Since $X$ is assumed Cohen-Macaulay, for any closed point there is a Koszul zero-cycle supported on it. Suppose a closed point $x\in X$ belongs to the support of $Q$ and denote the cohomology sheaves of $Q$ by $\mathcal{H}^i$. We consider the spectral sequence
  \begin{equation}\label{eqn_hyperext1}
     E^{p,q}_2 = \hom_{\D^b(X)}(\mathcal{H}^{-q},\O_{Z_{x}}[p]) \implies \hom_{\D^b(X)} (Q , \O_{Z_x}[p+q]).
  \end{equation}
  Since $x$ belongs to the support of $Q$, it must also belong to the support of at least one cohomology sheaf, say $\mathcal{H}^{q}$. Now using that structure sheaf of a Koszul zero-cycle is supported at a single closed point: $$\hom_{\O_{X,x}}(\mathcal{H}^{q}_{x},\O_{Z_{x}}) \cong \hom_{\O_X}(\mathcal{H}^{q},\O_{Z_{x}}).$$
   
  To show that these spaces are nonzero it is clearly enough to see that the left-hand side is nonzero.  In particular the stalk $\mathcal{H}^q_x$ is a finitely generated $\O_{X,x}$-module which is supported on the maximal ideal $\mathfrak{m}_x$ by assumption, and $\O_{Z_x}$ is a finitely generated $\O_{X,x}$ module whose support is precisely the maximal ideal. Hence by Lemma \ref{lemma_local maps} there is a nonzero morphism $$\mathcal{H}^q_x \to k(x) \to \O_{Z_x},$$ which shows that the morphism space is nonzero. 
   
  So now taking the maximal $q_0$ such that $\mathcal{H}^{q_0}_x \neq 0$ at $x$, we see that the term $E^{0,-q_0}_2$ survives to the $E_\infty$ page as there are no negative Ext groups between two sheaves, so the differential ending at $E^{0,-q_0}_2$ starts from zero, and the differential leaving $E^{0,-q_0}_2$ hits terms which are zero simply because we had chosen $q_0$ maximal, and all Ext groups between sheaves with disjoint support are trivial  (using the local-to-global spectral sequence). Hence this term survives, and since it is nonzero, we see $\hom_{\D^b(X)}(Q,\O_{Z_x}[-q_0]) \neq 0$.
 
  Conversely, suppose that $\hom_{\D^b(X)}(Q,\O_{Z_x}[m]) \neq 0$ for some $m \in \Z$. Suppose by way of contradiction that $x \notin \operatorname{supp} Q$, equivalently, $x \notin \operatorname{supp}\mathcal{H}^i$ for all $i \in \Z$. But since sheaves with disjoint support have no nontrivial Ext groups by the local-to-global spectral sequence, the spectral sequence \ref{eqn_hyperext1} yields a contradiction. This shows the first equivalence.
   
  To check the latter equivalence, we first observe as above that if the support of $Q$ and $\O_{Z_x}$ are disjoint, then the local-to-global spectral sequence shows that $\hom(\O_{Z_x},Q[j]) = 0$ for all $j\in \Z$. Conversely, assume that $x\in \operatorname{supp}Q$, and now we wish to use the spectral sequence $$E^{p,q}_2 = \hom(\O_{Z_x},\mathcal{H}^q[p]) \implies \hom(\O_{Z_x},E[p+q])$$ where we as above denote $\mathcal{H}^i(Q)$ by just $\mathcal{H}^i$. Since $Q \in \D^b(X)$, we have that $E^{p,q}_2 = 0$ for $|q|>>0$. Noting that $\O_{Z_x}$ is defined via a Koszul resolution, we see that $E^{p,q}_2 =0$ for $p \notin [0,\dim X]$.
 
  So now let $q_0$ be the maximal integer such that $\mathcal{H}^{q_0}$ is supported at $x\in X$. Note that for all $q>q_0$, since the supports are disjoint, $E^{p,q}_2 =0$. We claim that $E^{\dim X,q_0} \neq 0$. If so, it is clear that this term survives the spectral sequence and hence proves the proposition. To prove the claim, consider the following chain of isomorphisms:
   \begin{align*}
     E^{\dim X,q_0} & \cong \ext^{\dim X}_{\O_X} (\O_{Z_x},\mathcal{H}^{q_0}) \\
     & \cong \sext^{\dim X}_{\O_X} (\O_{Z_x},\mathcal{H}^{q_0}) \\
     & = \mathcal{H}^{\dim X}(\R \shom_{\O_X}(\O_{Z_x},\mathcal{H}^{q_0})) \\ 
     & \cong \mathcal{H}^{\dim X} (\O_{Z_x} \tens{\L} \mathcal{H}^{q_0}[-\dim X]) \\ 
     & \cong \mathcal{H}^{0} (\O_{Z_x} \tens{\L} \mathcal{H}^{q_0}) \\
     & \cong \O_{Z_x} \otimes \mathcal{H}^{q_0}
   \end{align*}
  where the second isomorphism follows from the local-to-global spectral sequence and the fourth follows as since $\O_{Z_x}$ is a Koszul zero cycle, $\O_{Z_x}^\vee \cong \O_{Z_x} [-\dim X]$. In the last line, since both are nonzero finitely generated modules over the local ring $\O_{X,x}$, this tensor product is nonzero, hence the claim.
\end{proof}

We also have the following result using very similar techniques. See \cite[Proposition A.91]{Bartocci-FM_in_geometry_and_physics}.
\begin{lemma}\label{lem_detect support2}
  Let $X$ be a quasi-projective variety, $x\in X$ be a closed point, and $E \in \D^b(X)$. Then $x\in \operatorname{supp} E$ if and only if there is some $i\in \Z$ such that $\hom(E,k(x)[i]) \neq 0$.
\end{lemma}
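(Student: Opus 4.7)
My plan is to follow the template of the first half of Lemma \ref{lem_detect_support}: analyze morphisms from $E$ to shifts of $k(x)$ via the hyperext spectral sequence
\[
  E^{p,q}_2 = \hom_{\D^b(X)}(\mathcal{H}^{-q}(E), k(x)[p]) \;\Longrightarrow\; \hom_{\D^b(X)}(E, k(x)[p+q]),
\]
where $\mathcal{H}^j(E)$ is the $j$-th cohomology sheaf of $E$. Since $k(x)$ is a genuine skyscraper sheaf, no self-duality is needed and only the direction $\hom(E, k(x)[i])$ is at play, so no Cohen-Macaulay hypothesis is needed and the argument simplifies relative to Lemma \ref{lem_detect_support}.

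For the backward direction, suppose $x \notin \operatorname{supp} E$. Then for every $j$, the support of $\mathcal{H}^j(E)$ is disjoint from $\operatorname{supp} k(x) = \{x\}$. The $\sext$-sheaves are supported on the intersection of supports, so the local-to-global spectral sequence gives $\hom(\mathcal{H}^j(E), k(x)[p]) = 0$ for all $p, j$. Thus the $E_2$-page vanishes identically and $\hom(E, k(x)[n]) = 0$ for every $n$.

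For the forward direction, assume $x \in \operatorname{supp} E$ and let $q_0$ be the largest integer such that $x \in \operatorname{supp} \mathcal{H}^{q_0}(E)$. The stalk $\mathcal{H}^{q_0}(E)_x$ is a nonzero finitely generated $\O_{X,x}$-module supported at the maximal ideal $\mathfrak{m}_x$, so by Lemma \ref{lemma_local maps} it admits a surjection onto $k(x)$; this globalizes to a nonzero map $\mathcal{H}^{q_0}(E) \to k(x)$, i.e.\ $E^{0,-q_0}_2 \neq 0$. To see that this entry survives to $E_\infty$, observe that for $r \geq 2$ the incoming differential $d_r$ originates at $E^{-r,-q_0+r-1}_r$, a subquotient of $\hom(\mathcal{H}^{q_0-r+1}(E), k(x)[-r])$, which vanishes since negative Ext groups between sheaves are zero. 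The outgoing differential $d_r$ lands in $E^{r,-q_0-r+1}_r$, a subquotient of $\hom(\mathcal{H}^{q_0+r-1}(E), k(x)[r])$; by maximality of $q_0$, the sheaf $\mathcal{H}^{q_0+r-1}(E)$ has support disjoint from $\{x\}$ for $r \geq 2$, so this group vanishes by the support argument of the previous paragraph. Hence $E^{0,-q_0}_\infty \neq 0$ and $\hom(E, k(x)[-q_0]) \neq 0$.

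The only obstacle is bookkeeping with the spectral sequence indices and verifying survival of the distinguished term; once that is set up, the required vanishings reduce to the two inputs already used in Lemma \ref{lem_detect_support}, namely disjoint-support vanishing of Ext groups and the absence of negative Ext between sheaves.
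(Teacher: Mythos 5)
Your argument is correct, and it is exactly the route the paper intends: the paper gives no written proof for this lemma, only citing \cite[Proposition A.91]{Bartocci-FM_in_geometry_and_physics} and remarking that it follows by ``very similar techniques'' to Lemma \ref{lem_detect_support}, whose first half you have faithfully adapted (hyperext spectral sequence, surjection $\mathcal{H}^{q_0}(E)_x \to k(x)$ from Lemma \ref{lemma_local maps}, survival of $E^{0,-q_0}_2$ via vanishing of negative Ext and disjoint-support Ext). The only cosmetic point is that the stalk $\mathcal{H}^{q_0}(E)_x$ need only have $\mathfrak{m}_x$ \emph{in} its support rather than be supported exactly there, but that is all Lemma \ref{lemma_local maps} requires, so nothing is affected.
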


\begin{remark}\label{remark_indecomposable}
  As a final remark on the Koszul zero cycles, note that by their construction, they are local $k$-algebras, and hence indecomposable. That is, whenever $\O_{Z_x} \cong A \oplus B$ with $A,B \in \D^b(X)$, we must have that one of $A$ or $B$ is zero.
\end{remark}

\subsection{Rouquier functors}
All results in this section are contained in \cite[Section 4]{Rouquier-Dimensions_of_triangulated_categories} and \cite[Section 5]{Ballard-Derived_categories_of_singular_schemes_and_reconstruction}. 

Given a smooth projective variety $X$, the bounded derived category $\D^b(X)$ possesses a distinguished autoequivalence given by $$S_X(-) = (-) \tens{} \omega_X[\dim X] : \D^b(X) \to \D^b(X).$$ This functor captures the notion of Serre duality, and as such is called a Serre functor. More generally given a $k$-linear triangulated category $\T$, a Serre functor is a triangulated autoequivalence $S:\T \to \T$ for which there are natural isomorphisms $$\eta_{A,B}:\hom_{\T} (B,S(A)) \to \hom_{\T}(A,B)^*,$$ where ``$*$" indicates the vector space dual. 

When $X$ is singular, $\D^b(X)$ no longer possesses a Serre functor. When $X$ is Gorenstein however, we can instead work with $\perf X$.

\begin{proposition}[\cite{Ballard-Derived_categories_of_singular_schemes_and_reconstruction, SalasSalas-Reconstruction_from_the_derived_category}]
  If $X$ is a proper variety over a field $k$, the category $\perf X$ has a Serre functor if and only if $X$ is Gorenstein. In particular, the functor has the formula: $S_X(-) = (-) \tens{} \omega_X [\dim X]$.
\end{proposition}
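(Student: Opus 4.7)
The plan is to apply Grothendieck duality as the essential tool in both directions. Recall that for a projective variety $X$ of dimension $n$ with structure morphism $\pi : X \to \spec k$, the dualizing complex $\omega_X^\bullet = \pi^! k \in \D^b(X)$ satisfies the natural isomorphism $\R\hom_X(F, \omega_X^\bullet) \cong \R\Gamma(X, F)^*$ for all $F \in \D^b(X)$. I will use the standard characterizations: $X$ is Cohen--Macaulay if and only if $\omega_X^\bullet \cong \omega_X[n]$ is a single coherent sheaf placed in degree $-n$, and $X$ is Gorenstein if and only if $\omega_X^\bullet$ is a perfect complex; in the Gorenstein case, the dualizing sheaf $\omega_X$ is moreover a line bundle.

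For the ``if'' direction, assume $X$ is Gorenstein, so that $S_X(-) = (-) \otimes \omega_X[n]$ manifestly preserves $\perf X$ and is an autoequivalence. For $E, F \in \perf X$, perfectness of $E$ permits the tensor--hom adjunction $\R\hom(F, E \otimes L) \cong \R\hom(F \otimes E^\vee, L)$, after which Grothendieck duality gives
\[
\R\hom(F, S_X(E)) \cong \R\hom(F \otimes E^\vee, \omega_X[n]) \cong \R\Gamma(X, F \otimes E^\vee)^* \cong \R\hom(E, F)^*,
\]
with naturality automatic. Taking cohomology in degree zero supplies the required natural isomorphism $\hom(F, S_X(E)) \cong \hom(E, F)^*$.

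For the converse, assume $\perf X$ has a Serre functor $S$. For every $F \in \perf X$ and every $i \in \Z$, the defining property of $S$ combined with Grothendieck duality gives a chain of natural isomorphisms
\[
\hom(F, S(\O_X)[i]) \cong \hom(\O_X, F[-i])^* = H^{-i}(X, F)^* \cong \hom(F, \omega_X^\bullet[i]).
\]
Specializing to $F = S(\O_X)$ and $i = 0$ and transporting the identity yields a canonical morphism $\alpha : S(\O_X) \to \omega_X^\bullet$ in $\D^b(X)$; naturality in $F$ then forces the above isomorphism to coincide with postcomposition by $\alpha$ for every perfect $F$ and every $i$. Setting $C := \operatorname{cone}(\alpha) \in \D^b(X)$, the long exact sequence gives $\R\hom(F, C) = 0$ for all $F \in \perf X$, and it remains to show $C = 0$; for then $\omega_X^\bullet \cong S(\O_X)$ is perfect, which is the Gorenstein condition.

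To establish $C = 0$, I take $F = \O_X(-m)$ for $m \gg 0$ with respect to a fixed projective embedding: by Serre vanishing the hypercohomology spectral sequence $\ext^p(F, \mathcal{H}^q(C)) \Rightarrow \hom(F, C[p+q])$ collapses to its $p = 0$ row, yielding $\Gamma(X, \mathcal{H}^q(C)(m)) = 0$ for every $q$ and all sufficiently large $m$; Serre's theorem then forces every cohomology sheaf of $C$ to vanish. The main obstacle is precisely this final step: because $\perf X$ fails to generate $\D^b(X)$ when $X$ is singular, one cannot simply invoke Yoneda to identify $S(\O_X)$ with $\omega_X^\bullet$ as objects of $\D^b(X)$, and it is the projectivity of $X$, via the abundance of ample line bundle twists sitting inside $\perf X$, that supplies enough test objects to detect any nonzero coherent sheaf.
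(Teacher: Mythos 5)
The paper does not prove this proposition at all---it is quoted from the cited references (Ballard; Hern\'andez Ruip\'erez et al.), so there is no internal argument to compare against. Your proof is essentially the standard one from those sources and is correct in substance: the ``if'' direction via tensor--hom adjunction plus Grothendieck duality is fine, and in the converse the Yoneda-type argument producing $\alpha\colon S(\O_X)\to\omega_X^\bullet$ is legitimate even though $\omega_X^\bullet$ need not lie in $\perf X$ (the representing object $S(\O_X)$ does, which is all the partial Yoneda lemma requires), and the collapse of the hypercohomology spectral sequence for $\hom(\O_X(-m),C[\ast])$ with $m\gg 0$ correctly kills the cone. Two small points deserve attention. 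First, your converse uses a projective embedding (Serre vanishing and global generation of the twists $\mathcal{H}^q(C)(m)$), whereas the statement is phrased for \emph{proper} varieties; under the paper's standing convention that all varieties are projective this costs nothing, but as written your argument proves the projective case only. Second, having shown $\omega_X^\bullet$ is perfect (hence $X$ Gorenstein, by the standard local characterization you invoke), the asserted formula $S_X(-)=(-)\otimes\omega_X[\dim X]$ still requires one more line: either cite uniqueness of Serre functors up to canonical isomorphism, or simply observe that your ``if'' direction now exhibits this functor as a Serre functor and uniqueness identifies it with $S$. Neither point is a real gap, but both are worth making explicit.
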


The failure of $\perf X$ to have a Serre functor in the non-Gorenstein case is essentially a failure of existence, as the functors $\hom(A,-)$ are no longer represented by objects in $\perf X$. However $\hom(A,-)$ \emph{can} be represented by objects in $\D^b(X)$. This observation is due to Rouquier \cite[Section 4]{Rouquier-Dimensions_of_triangulated_categories}, where the following more general definition was given. 
\begin{definition}
  Let $\mathcal{S}$ and $\T$ be arbitrary $k$-linear triangulated categories, and $F:\mathcal{S} \to \T$ be a $k$-linear triangulated functor. A Rouquier functor associated to $F$ is a $k$-linear triangulated functor $R_F : \mathcal{S} \to \T$ for which there are natural isomorphisms $$\eta_{A,B}: \hom_{\T} (B,R_F(A)) \to \hom_{\T}(F(A),B)^*.$$
\end{definition}

Our main example is the case of projective schemes. For the following, see Proposition 7.47 and Remark 7.48 in \cite{Rouquier-Dimensions_of_triangulated_categories}, see also \cite[Example 5.12]{Ballard-Derived_categories_of_singular_schemes_and_reconstruction} for more details.

\begin{lemma}\label{lemma_Rouquier functor is dualizing complex}
  Let $f:X \to \spec k$ be a projective scheme over the field and $\iota:\perf X \hookrightarrow D(\qcoh X)$ the inclusion functor. Then the Rouquier functor $R_X := R_\iota$ exists and is isomorphic to $$(-) \tens{\L} f^! \O_{\spec k}.$$ Furthermore, it is a fully faithful functor mapping $\perf X$ into $\D^b(X)$.
\end{lemma}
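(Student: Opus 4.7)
The plan is to construct the Rouquier functor explicitly as tensor product with the dualizing complex $\omega_X^\bullet := f^!\O_{\spec k}$, and then verify each of the three claims (existence, the formula, and full faithfulness into $\D^b(X)$) using Grothendieck-Serre duality for the proper morphism $f:X\to\spec k$.

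First, I would check that $R_X := (-) \tens{\L} f^! \O_{\spec k}$ actually lands in $\D^b(X)$ (and hence in $D(\qcoh X)$). Because $f$ is projective, the dualizing complex $\omega_X^\bullet$ is a bounded complex with coherent cohomology. For any perfect $A$, the functor $A \tens{\L} (-)$ has finite cohomological amplitude and preserves coherence, so $A \tens{\L} \omega_X^\bullet \in \D^b(X)$.

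Next, to verify the defining property of a Rouquier functor associated to $\iota$, I would apply Grothendieck duality
$$\R f_* \R\shom_X(F,\omega_X^\bullet) \;\cong\; \R\hom_k(\R f_* F,\O_{\spec k})$$
to the object $F = A^\vee \tens{\L} B$ with $A \in \perf X$ and $B \in D(\qcoh X)$. Since $A$ is perfect, the reflexivity $A^{\vee\vee}\cong A$ together with the standard tensor-hom manipulations rewrites the left-hand side as $\R\hom_X(B, A\tens{\L}\omega_X^\bullet) = \R\hom_X(B, R_X(A))$, while the right-hand side becomes $\R\hom_X(A,B)^*$. Passing to $H^0$ produces the natural isomorphism
$$\eta_{A,B}:\hom_{D(\qcoh X)}(B,R_X(A)) \xrightarrow{\sim} \hom_{D(\qcoh X)}(A,B)^*$$
that identifies $R_X$ as the Rouquier functor of $\iota$.

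Finally, for full faithfulness of $R_X$ on $\perf X$, I would apply the Rouquier property twice. For $A, A' \in \perf X$, choosing $B = R_X(A')$ in the isomorphism just established gives
$$\hom(R_X(A'), R_X(A)) \;\cong\; \hom(A, R_X(A'))^*.$$
The inner Hom unfolds, by the same Grothendieck duality computation with the roles of $A$ and $A'$ exchanged, to $\hom(A', A)^*$. Combining these yields $\hom(R_X(A'), R_X(A)) \cong \hom(A',A)^{**}$, which equals $\hom(A',A)$ since all Hom spaces between objects of $\perf X$ on a projective scheme are finite-dimensional. The resulting isomorphism is natural and inverse to the map induced by $R_X$ on Hom sets, giving full faithfulness.

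The conceptual content is entirely Grothendieck duality, so the main thing to be careful about is not a clever computation but the \emph{naturality and functoriality} of the various duality isomorphisms in both variables $A$ and $B$, so that the pointwise isomorphism $\eta_{A,B}$ assembles into an isomorphism of functors and $R_X$ is genuinely a $k$-linear triangulated functor. This is standard in the modern formulation via dualizing complexes (and indeed is the content of the references cited just before the statement), so the argument reduces to recognizing $R_X$ as the object-level shadow of Grothendieck duality.
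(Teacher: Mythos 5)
The paper offers no proof of this lemma at all---it simply cites Proposition 7.47 and Remark 7.48 of Rouquier and Example 5.12 of Ballard---and your Grothendieck--Serre duality argument (identifying $R_X$ with $(-)\tens{\L} f^!\O_{\spec k}$, deducing the defining adjunction-type isomorphism from $\R f_*\R\shom_X(F,f^!\O_k)\cong\R\hom_k(\R f_*F,\O_k)$, and getting full faithfulness by applying the isomorphism twice plus finite-dimensionality of Homs) is exactly the content of those references. So your proposal is correct and takes essentially the same route as the sources the paper relies on.
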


In our situation, where the projective scheme $X$ is a Cohen-Macaulay variety, the Rouquier functor becomes $$R_X(-) = (-) \tens{\L} \omega_X [\dim X],$$ where $\omega_X$ is the dualizing sheaf in the sense of \cite{Hartshorne-AG}. It is well-known that for a Cohen-Macaulay variety, the dualizing sheaf is a divisorial sheaf, that is, reflexive (in particular, torsion-free) and generically rank 1. When the variety is Gorenstein, the dualizing sheaf is actually a line bundle (\cite[Proposition V.9.3]{Hartshorne-Residues_and_Duality}). 

\subsection{Semiorthogonal decompositions}
Recall the following definition.

\begin{definition}\label{def_admissible}
  Given a triangulated category $\T$, a triangulated subcategory $\A \subset \T$ is said to be (left) right admissible if the inclusion functor $i:\A \to \T$ admits a (left) right adjoint ($i^*$) $i^!$. We say simply that $\A$ is admissible if it is both left and right admissible.
\end{definition}
\noindent Now let us remind the reader of the notion of a semiorthogonal decomposition.

\begin{definition}\label{def_semi_dec}
  Let $\T$ be a triangulated category. A pair of strictly full (admissible) triangulated subcategories $\A, \B$ of $\T$ is a (admissible) semiorthogonal decomposition of $\T$ if the following conditions are satisfied:
  \begin{enumerate}
    \item $\A \subset \B^\perp$, and
    \item for any $T \in \T$, there is a triangle $$B \to T \to A \to B[1]$$ with $A \in \A$ and $B \in \B$.
  \end{enumerate} 
  We then write $\T = \langle \A,\B \rangle$.
\end{definition}

\begin{remark}
  Given a left (right) admissible subcategory $\A \subset \T$, we have a semiorthogonal decomposition  $\langle \A, \prescript{\perp}{}{\A}\rangle$ (respectively $\langle \A^\perp,\A \rangle$).  Conversely if $\T = \langle \A,\B\rangle$, then $\A$ ($\B$) is automatically left (right) admissible \cite{Bondal-Quivers}.
\end{remark}

\begin{remark}
  When $X$ is smooth, any semiorthogonal decomposition $\D^b(X) = \langle \A, \B \rangle$ is automatically admissible. However in the singular case, this is no longer true. Admissible semiorthogonal decompositions are significantly better behaved, as the following very useful lemma suggests.
\end{remark}

\begin{lemma}[\cite{Orlov-Triangulated_cats_of_singularities}, Lemma 1.10]\label{lemma-restrict-sod}
  Let $X$ be a projective variety and suppose that $\D^b(X) = \langle \A_1,...,\A_n\rangle$ is an admissible semiorthogonal decomposition. Then $$\perf X = \langle  \A_1\cap \perf X,...,\A_n \cap \perf X\rangle$$ is an admissible semiorthogonal decomposition for $\perf X$. 
\end{lemma}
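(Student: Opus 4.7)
By induction on $n$, it suffices to prove the case $n=2$: given an admissible semiorthogonal decomposition $\D^b(X) = \langle \A, \B\rangle$, show that $\perf X = \langle \A\cap \perf X,\B\cap \perf X\rangle$ is an admissible semiorthogonal decomposition. Admissibility supplies triangulated projection functors $\pi_{\A}\colon \D^b(X)\to \A$ and $\pi_{\B}\colon \D^b(X)\to \B$ (the appropriate adjoints of the inclusions $i_{\A}$, $i_{\B}$), fitting into a canonical triangle
$$\pi_{\B}(E)\to E\to \pi_{\A}(E)\to \pi_{\B}(E)[1]$$
for every $E\in \D^b(X)$.

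The crux of the argument is to show that $\pi_{\A}$ and $\pi_{\B}$ each send $\perf X$ to itself. Once this is known, semiorthogonality of $\A\cap \perf X$ and $\B\cap \perf X$ is inherited from $\D^b(X)$, the canonical triangle applied to $E\in \perf X$ supplies the decomposition property, and the restrictions of the ambient adjunctions $i_{\A}\dashv \pi_{\A}$, $\pi_{\B}\dashv i_{\B}$ to $\perf X$ witness admissibility of each component in $\perf X$.

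For the preservation claim, first observe that since $\perf X$ is thick in $\D^b(X)$, it suffices via the canonical triangle to show only one of $\pi_{\A}(E), \pi_{\B}(E)$ is perfect; the other is then the cone of a morphism between perfect complexes. To establish perfectness of a projection I would pass to the unbounded derived category $\D(\qcoh X)$, in which $\perf X$ coincides with the subcategory of compact objects (for $X$ projective, $\D(\qcoh X)$ is compactly generated by $\perf X$). The admissible subcategories $\A, \B\subset \D^b(X)$ extend to localizing subcategories $\widetilde{\A},\widetilde{\B}\subset \D(\qcoh X)$ whose inclusions preserve direct sums; by the standard adjoint-functor principle, the corresponding adjoints then preserve compact objects. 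Since the extended projections restrict to $\pi_{\A}, \pi_{\B}$ on bounded complexes, this yields $\pi_{\A}(E), \pi_{\B}(E)\in \perf X$ whenever $E\in \perf X$.

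The main obstacle I anticipate is precisely this extension step: one must carefully construct $\widetilde{\A},\widetilde{\B}$ as localizing subcategories, verify that their inclusions commute with coproducts, and check that restricting the extended adjoints to $\D^b(X)$ recovers the original projection functors compatibly with the canonical triangle. This step uses the projectivity of $X$ essentially and illustrates why admissibility of the ambient SOD is unavoidable, since for a non-admissible SOD there is no reason the components should restrict to $\perf X$ at all. The general $n$-term case follows by iterating the $n=2$ result applied to the nested decomposition $\D^b(X) = \langle \langle \A_1,\ldots,\A_{n-1}\rangle, \A_n\rangle$.
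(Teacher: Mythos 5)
This lemma is quoted in the paper without proof (it is Orlov's result), so the relevant comparison is with Orlov's argument. Your outline is fine up to the single step that carries all the content --- that the projection functors $\pi_{\A},\pi_{\B}$ preserve $\perf X$ --- and there the proposal has a genuine gap. First, the detour through $\D(\qcoh X)$ is not carried out: you would need to produce localizing subcategories $\widetilde{\A},\widetilde{\B}$ extending $\A,\B$ whose projections restrict to $\pi_{\A},\pi_{\B}$, and the standard machinery for extending semiorthogonal decompositions to the unbounded category starts from a decomposition of the \emph{compact} objects (or from components generated by their perfect objects), which is essentially what the lemma asks you to produce; the plan is therefore close to circular. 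Second, even granting the extension, the Neeman compactness criterion is applied in the wrong place: the fact that the inclusion $\widetilde{\A}\hookrightarrow\D(\qcoh X)$ preserves coproducts only shows that its left adjoint sends compact objects of $\D(\qcoh X)$ to objects compact \emph{in $\widetilde{\A}$}. To conclude that $\widetilde{\pi}_{\A}(E)$ is compact in $\D(\qcoh X)$, i.e.\ perfect, you also need the inclusion itself to preserve compactness, which by the same criterion requires its \emph{right} adjoint to preserve coproducts --- a condition you never address.

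Orlov's actual proof avoids the unbounded category entirely. The key input is the intrinsic characterization of $\perf X$ inside $\D^b(X)$ for $X$ projective: an object $E$ is perfect if and only if it is homologically finite, meaning that for every $G\in\D^b(X)$ one has $\hom(E,G[i])=0$ for all but finitely many $i$. Granting this, admissibility enters through the existence of \emph{both} adjoints to the inclusion $i_{\A}$: for $E\in\perf X$ and any $G\in\D^b(X)$,
$$\hom(i_{\A}^*E,G[i])\cong\hom(i_{\A}^*E,i_{\A}^!G[i])\cong\hom(E,i_{\A}i_{\A}^!G[i]),$$
which vanishes for $|i|\gg 0$ because $E$ is homologically finite and $i_{\A}i_{\A}^!G\in\D^b(X)$. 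Hence $\pi_{\A}(E)=i_{\A}^*E$ is homologically finite, hence perfect; the rest of your argument (thickness of $\perf X$, restriction of the canonical triangles and of the adjunctions) then goes through verbatim. To repair your write-up, replace the $\D(\qcoh X)$ step with this adjunction computation, or else supply an actual construction of the localizing extensions together with the missing coproduct-preservation of the right adjoints.
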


\begin{remark}
  If, in Definition \ref{def_semi_dec}, we also required the dual condition that $\B \subset \prescript{\perp}{}{\A}$, so that there are no morphisms from objects of $\A$ to objects of $\B$, then the corresponding notion is that of an orthogonal decomposition. It is well known that a connected scheme admits no nontrivial orthogonal decompositions of either $\perf X$ or $\D^b(X)$.
\end{remark}

\section{Main results}

\begin{definition}
  Let $\T$ be a triangulated category. We say that $\T$ is (weakly) indecomposable if any (admissible) semiorthogonal decomposition is trivial. That is, whenever $\T = \langle \A,\B \rangle$, we have that either $\A=0$ or $\B=0$.
\end{definition}

\noindent The original version of the following theorem can be found \cite[Theorem 3.1]{KawataniOkawa-Nonexistence}, where it is proven for smooth DM stacks with at least one non-stacky point. Here we prove an extension to Cohen-Macaulay projective varieties.

\begin{theorem}\label{theorem-main}
  Let $X$ be a Cohen-Macaulay projective variety with dualizing sheaf $\omega_X$. Suppose that $\perf X = \langle \A, \B \rangle$. Then:
  \begin{enumerate}
    \item for any closed point $x \in X \setminus \operatorname{Bs}|\omega_X|$, and all Kozsul zero-cycles $Z_x$ at $x$, exactly one of the following holds:
    \begin{enumerate}
      \item $\O_{Z_x} \in \A$, or
      \item $\O_{Z_x} \in \B$.
    \end{enumerate}
    \item When (1a) (resp. (1b)) is satisfied, then the support of any object in $\B$ (resp. $\A$) is contained in $\operatorname{Bs}|\omega_X|$.
  \end{enumerate}
\end{theorem}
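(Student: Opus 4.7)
The plan is first to decompose $\O_{Z_x}$ via the given semiorthogonal decomposition, obtaining a distinguished triangle
\[ B \to \O_{Z_x} \xrightarrow{\alpha} A \to B[1] \]
in $\perf X$ with $A \in \A$ and $B \in \B$, and then to show one of $A, B$ must vanish. Part~(2) will follow as a clean consequence of part~(1): assuming (1a) holds at each $x \notin \operatorname{Bs}|\omega_X|$, one has $\O_{Z_x} \in \A$, so for every $B \in \B$ the semiorthogonality (extended over shifts) gives $\hom(B, \O_{Z_x}[n]) = 0$ for all $n$; Lemma~\ref{lem_detect_support} then forces $x \notin \operatorname{supp}(B)$, and letting $x$ range over the complement of the base locus yields $\operatorname{supp}(B) \subseteq \operatorname{Bs}|\omega_X|$. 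The case (1b) is symmetric.

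For part~(1), since the support of the middle term of a distinguished triangle is contained in the union of the outer supports, $\operatorname{supp}(\O_{Z_x}) = \{x\}$ forces $x \in \operatorname{supp}(A) \cup \operatorname{supp}(B)$. Consider first the subcase $x \notin \operatorname{supp}(B)$: Lemma~\ref{lem_detect_support} gives $\hom(B, \O_{Z_x}[n]) = 0$ for all $n$, and applying $\hom(-, \O_{Z_x})$ to the decomposition triangle produces an isomorphism $\hom(A, \O_{Z_x}) \cong \hom(\O_{Z_x}, \O_{Z_x})$. A preimage of $\operatorname{id}_{\O_{Z_x}}$ is a retraction of $\alpha$, exhibiting $A \cong \O_{Z_x} \oplus B[1]$; since $\A$ is closed under direct summands, $B[1] \in \A \cap \B = 0$, whence $B = 0$ and $\O_{Z_x} \in \A$. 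The symmetric subcase $x \notin \operatorname{supp}(A)$, handled analogously with $\hom(\O_{Z_x}, -)$, yields $\O_{Z_x} \in \B$. It thus remains to rule out the case $x \in \operatorname{supp}(A) \cap \operatorname{supp}(B)$.

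For this remaining case, the hypothesis $x \notin \operatorname{Bs}|\omega_X|$ provides a section $s \in H^0(X, \omega_X)$ with $s(x) \neq 0$; since $X$ is integral and $\omega_X$ is torsion-free, $s$ defines an injection $\O_X \hookrightarrow \omega_X$ with cokernel $Q$. Via the Rouquier functor $R_X$ of Lemma~\ref{lemma_Rouquier functor is dualizing complex}, Rouquier duality identifies
\[ \hom(A, B[1]) \cong \hom(B, A \tens{\L} \omega_X [\dim X - 1])^*. \]
Applying $\hom(B, -)$ to the cofiber triangle $A \to A \tens{\L} \omega_X \to A \tens{\L} Q \to A[1]$ arising from $s \otimes \operatorname{id}_A$, and using the semiorthogonality vanishing $\hom(B, A[n]) = 0$ for all $n$, reduces this further to $\hom(B, A \tens{\L} Q [\dim X - 1])^*$. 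Once this vanishes, the decomposition triangle splits as $\O_{Z_x} \cong A \oplus B$, and the indecomposability of $\O_{Z_x}$ (Remark~\ref{remark_indecomposable}) forces one of $A, B$ to be zero, contradicting both having $x$ in their supports. The main obstacle is precisely this final vanishing: in the non-Gorenstein case $Q$ can have $x$ in its support, so a direct disjoint-support argument fails, and completing the proof likely requires invoking the Koszul self-duality $\O_{Z_x}^\vee \cong \O_{Z_x}[-\dim X]$ together with the full strength of Lemma~\ref{lem_detect_support}.
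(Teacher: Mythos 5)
Your reduction of part (2) to part (1) via Lemma \ref{lem_detect_support} is correct and consistent with the paper (though note the paper also proves, via the canonical filtration of $\O_X$ and irreducibility of $X$, that the \emph{same} alternative (1a)/(1b) occurs for all $x \notin \operatorname{Bs}|\omega_X|$ simultaneously; you assume this uniformity rather than establish it). Likewise your two subcases $x \notin \operatorname{supp}(B)$ and $x \notin \operatorname{supp}(A)$ are handled correctly, by splitting arguments that agree in substance with the paper's endgame.

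The genuine gap is exactly where you flag it: the case $x \in \operatorname{supp}(A) \cap \operatorname{supp}(B)$. Your global strategy --- kill $\hom(A,B[1])$ via Rouquier duality and the triangle $A \to A \tens{\L} \omega_X \to A \tens{\L} Q$ --- founders on the term $\hom(B, A \tens{\L} Q[\dim X - 1])$, and this is not a technicality that Koszul self-duality or Lemma \ref{lem_detect_support} can repair: $Q$ is supported on the entire zero locus $V(s)$, a large closed subset that can meet $\operatorname{supp}(A)\cap\operatorname{supp}(B)$, so no disjoint-support argument applies, and proving $\hom(A,B[1])=0$ outright is in any case more than the theorem needs. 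The missing idea, taken from \cite{KawataniOkawa-Nonexistence} and used in the paper, is to \emph{localize} rather than pass to the cokernel of $s$: the composite $A \overset{f}{\to} B[1] \overset{1\otimes s}{\to} B[1]\otimes\omega_X$ vanishes globally, since by Rouquier duality it lives in $\hom(B, A[\dim X - 1])^* = 0$; one then restricts along $j\colon U \hookrightarrow X$ for an open neighborhood $U \subseteq D(s)^{\circ}$ of $x$ on which $s$ is fiberwise nonvanishing, concludes $f|_U = 0$, and hence that the restricted triangle splits as $\O_{Z_x} \cong A|_U \oplus B|_U$. Indecomposability of $\O_{Z_x}$ (Remark \ref{remark_indecomposable}) kills one summand over $U$, which, since $U$ is a neighborhood of $x$, shows $x \notin \operatorname{supp}(A)$ or $x \notin \operatorname{supp}(B)$ --- i.e.\ the third case cannot occur, and you land back in one of your two good subcases. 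Without this localization step your argument does not close.
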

The following is nearly verbatim from \cite[Theorem 3.1]{KawataniOkawa-Nonexistence}, but we include the proof with our modifications for the convenience of the reader. 
\begin{proof}
  Let $x\in X \setminus \operatorname{Bs}|\omega_X|$ be a closed point and, using that $X$ is Cohen-Macaulay, a Koszul zero cycle $Z_x$ supported at $x$. Suppose that $\perf X = \langle \A, \B \rangle$, and consider the triangle $$B \to \O_{Z_x} \to A \overset{f}{\to} B[1]$$ which is obtained from the semiorthogonal decomposition. If $f=0$, then $\O_{Z_x} \cong A \oplus B$, but this implies that one of $A$ or $B$ is zero (see Remark \ref{remark_indecomposable}), and hence $\O_{Z_x} \in \B$ or $\A$ respectively, so we may assume that $f \neq 0$. 
  
  Now choose a global section $s\in H^0(X,\omega_X) \cong \hom(\O_X,\omega_X)$ such that $s(x) \neq 0$ and $x\in D(s)^\circ$, the interior of $D(s)$. We may choose such a global section as $x \notin \operatorname{Bs}|\omega_X|$, and by definition, the base locus of $\omega_X$ is the complement of the union of the interiors of the sets $D(s)$ (and in particular, is a closed subset of $X$). 
  
  We now consider the composition $$A \otimes \O_X \overset{f \otimes \mathbbm{1}}{\to} B[1] \otimes \O_X \overset{\mathbbm{1} \otimes s}{\to} B[1] \otimes\omega_X.$$ Using the Rouqiuer functor, the composition lies in $$\hom_{\D^b(X)}(A,B\otimes \omega_X[1]) \cong \hom_{\perf X}(B,A[\dim X -1])^* =0,$$ where the latter space is zero by the assumption that $\perf X = \langle \A ,\B \rangle$. Now let $U \subset D(s)^\circ$ be an open neighborhood of $x$, and set $j :U  \hookrightarrow X$ to be the open immersion. We now examine $j^* (f \otimes s) : = (f \otimes s)|_U = f|_U \otimes s|_U$ in exactly the same manner as \cite[Theorem 3.1]{KawataniOkawa-Nonexistence}. 
  
  We know that this composition must be identically zero, and further we know that $j^* s := s|_U \neq 0$ by our choice of $U$. So we see that $f|_U = 0$; then since $j^*$ is triangulated, the restriction of the triangle $$B|_U \to \O_{Z_x}\to A|_U \overset{0}{\to} B[1]|_U$$ implies that $\O_{Z_x} \cong A|_U \oplus B|_U$. As before, one of the two summands must be trivial. Suppose that $A|_U \cong 0$. Then $\O_{Z_x} \cong B|_U$, and since the support of $A$ and $\O_{Z_x}$ is disjoint, it follows that the morphism $\O_{Z_x} \to A$ is zero, and hence $B \cong \O_{Z_x} \oplus A[-1]$. If $A[-1] \neq 0$, then there must exist a nonzero projection $B \to A[-1]$, contradicting that the decomposition was semiorthogonal. Hence $A[-1] \cong 0$, and $\O_{Z_x} \in \B$. If instead $B|_U \cong 0$, we similarly obtain $\O_{Z_x} \in \A$.   

  For the second claim, we also proceed in the same manner as in \cite[Theorem 3.1]{KawataniOkawa-Nonexistence}, but we use Lemma \ref{lem_detect_support} in place of the corresponding \cite[Lemma 2.8]{KawataniOkawa-Nonexistence}.  Notice that if $\O_{Z_x} \in \A$ (or $\B$) for some closed point $x \in X$, then any object in $\B$ (or $\A$) must have proper support, by Lemma \ref{lem_detect_support}. So now suppose that both $\A$ and $\B$ contain Koszul zero cycles, and consider the canonical filtration of the structure sheaf $$B \to \O_X \to A \to B[1].$$ It then follows that the support of $\O_X$ is the union of the supports of $A$ and $B$. But since both $\A$ and $\B$ contain Koszul zero cycles the supports of $A$ and $B$ are proper, which is a contradiction since we have assumed $X$ is irreducible. Hence only one of the two subcategories can contain any of the Koszul zero cycles. 
\end{proof}

\begin{corollary}\label{corollary-main}
  Suppose that $X$ is a Cohen-Macaulay projective variety such that every connected component of $\operatorname{Bs}|\omega_X|$ is contained in an open subset on which $\omega_X$ is trivial. Then $\perf X$ is indecomposable. If $\omega_X$ is globally generated, then $\D^b(X)$ is weakly indecomposable.
\end{corollary}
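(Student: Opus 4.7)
The plan is to combine Theorem \ref{theorem-main} with a Rouquier-duality computation and the spanning property of Koszul zero-cycles. Given any semiorthogonal decomposition $\perf X = \langle \A, \B\rangle$, Theorem \ref{theorem-main} lets us relabel so that $\O_{Z_y} \in \A$ for every $y \in X \setminus \operatorname{Bs}|\omega_X|$ and so that the support of every object of $\B$ lies inside $\operatorname{Bs}|\omega_X|$; to deduce indecomposability of $\perf X$ it suffices to show $\B = 0$, and the plan is to do this by first upgrading the dichotomy to every closed point of $X$.

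Fix $x \in \operatorname{Bs}|\omega_X|$ and form the triangle $B_x \to \O_{Z_x} \to A_x \overset{f}{\to} B_x[1]$ afforded by the decomposition. Since $\operatorname{supp}(B_x) \subseteq \operatorname{Bs}|\omega_X|$, each connected component of the support lies in a single open $U$ with $\omega_X|_U \cong \O_U$ by hypothesis. Decomposing $B_x$ according to the connected components of its support (a perfect complex with support in a disjoint union of closed subsets splits into a direct sum of perfect complexes supported on each piece, by extension-by-zero) reduces to the case $\operatorname{supp}(B_x) \subset U$. On $U$ the trivialization gives $B_x \otimes^{\L} \omega_X|_U \cong B_x|_U$, and since both sides are extensions by zero from $U$, one obtains a global isomorphism $B_x \otimes^{\L} \omega_X \cong B_x$; hence by Lemma \ref{lemma_Rouquier functor is dualizing complex}, $R_X(B_x) \cong B_x[\dim X]$. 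Applying Rouquier adjunction to shifts of $A_x$ then yields, for every $k \in \Z$,
\[
\hom(A_x, B_x[\dim X - k]) \cong \hom(A_x[k], R_X(B_x)) \cong \hom(B_x, A_x[k])^* = 0,
\]
the last equality by semiorthogonality. In particular $f \in \hom(A_x, B_x[1])$ vanishes, so the triangle splits as $\O_{Z_x} \cong A_x \oplus B_x$. Indecomposability of $\O_{Z_x}$ (Remark \ref{remark_indecomposable}) forces one summand to vanish; if it were $A_x$, then $\O_{Z_x}$ would lie in $\B$, and together with the Koszul zero-cycles already in $\A$ this contradicts irreducibility of $X$ via the canonical filtration of $\O_X$, exactly as in the last paragraph of the proof of Theorem \ref{theorem-main}. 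Hence $\O_{Z_x} \in \A$ for every closed point of $X$.

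With $\O_{Z_x} \in \A$ for all $x$, semiorthogonality gives $\hom(T, \O_{Z_x}[j]) = 0$ for every $T \in \B$ and every $x$ and $j$, so Lemma \ref{lem_detect_support} yields $\operatorname{supp}(T) = \emptyset$ and therefore $T = 0$; thus $\B = 0$ and $\perf X$ is indecomposable. For the second assertion, global generation of $\omega_X$ implies $\operatorname{Bs}|\omega_X| = \emptyset$; given an admissible decomposition $\D^b(X) = \langle \A, \B\rangle$, Lemma \ref{lemma-restrict-sod} restricts it to an admissible decomposition of $\perf X$, and Theorem \ref{theorem-main} then places every $\O_{Z_x}$ in a single component, say $\A \cap \perf X \subseteq \A$. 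Since Lemma \ref{lem_detect_support} applies to arbitrary objects of $\D^b(X)$, the same spanning-class argument produces $\B = 0$.

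The main obstacle is the identification $R_X(B_x) \cong B_x[\dim X]$: the local trivializations $\omega_X|_{U_i} \cong \O_{U_i}$ around different components of $\operatorname{Bs}|\omega_X|$ need not glue on overlaps, and it is the preliminary decomposition of $B_x$ according to connected components of its support that lets one replace $B_x \otimes^{\L} \omega_X$ by $B_x$ globally. The remainder of the argument then follows the Rouquier-duality template of Theorem \ref{theorem-main} combined with a standard spanning-class deduction.
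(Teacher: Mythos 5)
Your proof is correct, and for the first assertion it takes a genuinely (if mildly) different route from the paper's, even though the key input is identical. Both arguments hinge on the same computation: for an object supported inside $\operatorname{Bs}|\omega_X|$, the hypothesis gives $B \otimes^{\L}\omega_X \cong B$, hence $R_X(B) \cong B[\dim X]$, and Rouquier duality then converts semiorthogonality into the vanishing of Homs that are not a priori forbidden. The paper applies this to \emph{every} object of the component $\B$ supported in the base locus, concludes that $\langle \A,\B\rangle$ is in fact an \emph{orthogonal} decomposition, and kills it by connectedness of $X$; you instead apply it only to the $\B$-piece $B_x$ of the decomposition triangle of $\O_{Z_x}$ for $x$ a base point, kill the connecting morphism $f \in \hom(A_x,B_x[1])$, and thereby extend the dichotomy of Theorem \ref{theorem-main} to all closed points before finishing with the spanning-class argument via Lemma \ref{lem_detect_support}. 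Both endgames are valid; the paper's is shorter, while yours buys an explicit treatment of a point the paper glosses over, namely that the trivializations of $\omega_X$ on opens around different connected components of $\operatorname{Bs}|\omega_X|$ need not glue, so one must first split $B$ (or $B_x$) as a direct sum over the connected components of its support before upgrading the local isomorphism $B|_U \otimes \omega_X|_U \cong B|_U$ to a global one. (Both proofs tacitly assume $X \setminus \operatorname{Bs}|\omega_X| \neq \emptyset$ so that Theorem \ref{theorem-main} has content; in the degenerate case the hypothesis forces $\omega_X \cong \O_X$ and the orthogonality argument applies directly, so this is harmless.) Your proof of the second assertion coincides with the paper's.
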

\begin{proof}
  Suppose that every connected component of $\operatorname{Bs}|\omega_X|$ is contained in an open subset on which $\omega_X$ is trivial. Then by Theorem \ref{theorem-main}, given any nontrivial semiorthogonal decomposition $\perf X = \langle \A, \B \rangle$, one of the components must satisfy the condition that all of its objects must be supported in $\operatorname{Bs}|\omega_X|$. Without loss of generality we can assume that this component is $\B$. Then for any $B \in \B$, since its support is contained in an open subset where $\omega_X$ is trivial, $R_X(B) \cong B$. Hence for any object $A \in \A$, and any $n \in \Z$, $$\hom (B,A[n]) \cong \hom(A,B[n+\dim X])^* =0$$ by semiorthogonality, and so the decomposition $\perf X = \langle \A,\B \rangle$ is actually an orthogonal decomposition. But this contradicts that $X$ is connected (as we have assumed it is integral). 
  
  Now suppose that $\operatorname{Bs}|\omega_X|$ is empty. Consider an admissible semiorthogonal decomposition $\D^b(X) = \langle \A, \B \rangle$. Then this induces a semiorthogonal decomposition of $\perf X$ by Lemma \ref{lemma-restrict-sod}, and by Theorem \ref{theorem-main}, one of the components, say $\tilde \A$ for concreteness, must contain the spanning set of Koszul zero-cycles $\{\O_{Z_x}\,|\, x\in X \text{ closed}\}$. It then follows that $\A$ also contains the spanning set of Koszul zero cycles since $\tilde \A = \perf X \cap \A$, and hence both $\B$ and $\tilde \B$ are zero. In particular, the admissible decomposition $\D^b(X) = \langle \A, \B \rangle$ must be trivial.
\end{proof}

These results alone are interesting, but due to the nature of the dualizing sheaf, the base locus $\operatorname{Bs}|\omega_X|$ can be difficult to analyze in many situations if $\omega_X$ is not globally generated. The next simplest situation is when $\operatorname{Bs}|\omega_X|$ is a finite number of points. To analyze this, we formulate two more results of possibly independent interest.

\begin{lemma} \label{prop_main}
  Let $X$ be a projective variety, and let $Y \subset X$ be a finite collection of isolated closed points. Suppose that $\A \subset \D^b(X)$ is a left admissible subcategory such that the support of every object in $\A$ is contained in $Y$. Then $\A=0$.
\end{lemma}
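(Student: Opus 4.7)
My plan is to first reduce to the case of a single closed point and then exploit the semiorthogonal decomposition to derive a contradiction.

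\emph{Reduction.} Since $Y$ is a finite disjoint collection of closed points, every $A \in \A$ decomposes canonically as $A = \bigoplus_{y \in Y} A^{(y)}$ with $\operatorname{supp}(A^{(y)}) \subseteq \{y\}$, using pairwise disjoint open neighborhoods of the points of $Y$ and idempotent-completeness of $\D^b(X)$. Each summand is a direct summand of $A$ and so lies in $\A$, producing a block decomposition $\A = \bigoplus_{y \in Y} \A_y$ with each $\A_y$ left admissible in $\D^b(X)$ (its left adjoint is the composition of $i_\A^L$ with the projector onto $\A_y$). It thus suffices to treat the case where $Y = \{y\}$ is a single closed point.

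\emph{Producing a nonzero object of $\A$.} Set $\mathcal{B} = {}^\perp\A$, so $\D^b(X) = \langle \A, \mathcal{B}\rangle$. For any closed $x \neq y$, disjoint support yields $k(x) \in \mathcal{B}$. I would next verify that $\O_X \notin \mathcal{B}$: any nonzero $A \in \A$ supported at $y$ satisfies $\operatorname{R}\Gamma(X, A) \cong A_y$ (the stalk complex, which is a nonzero complex of $k$-vector spaces because $A \neq 0$), so some $H^i(X, A) = \hom(\O_X, A[i])$ is nonzero. Hence $A_0 := i_\A^L(\O_X) \in \A$ is nonzero, and fits in a distinguished triangle $B_0 \to \O_X \to A_0 \to B_0[1]$ with $B_0 \in \mathcal{B}$ satisfying $B_0|_{X \setminus \{y\}} \cong \O_{X \setminus \{y\}}$. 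Adjunction identifies $\operatorname{End}_\A(A_0) \cong H^0(X, A_0)$, and the unit $\eta : \O_X \to A_0$ factors through the stalk $\O_{X,y}$; this makes $R := \operatorname{End}_\A(A_0)$ a finite-dimensional $k$-algebra equipped with a natural map from $\O_{X,y}$.

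\emph{Main obstacle --- deriving the contradiction.} The difficult step is using the existence of $A_0$ (or, equivalently, $A_y := i_\A^L(k(y))$, shown nonzero by a spectral sequence argument analogous to the proof of Lemma \ref{lem_detect_support}) to produce an impossibility. The approach I would take is to exploit the ampleness of some line bundle $L$ on the positive-dimensional projective variety $X$. Since for $A' \in \A$ supported at $y$ we have $\hom(L^{-n}, A') \cong H^0(X, A')$ (using local triviality of $L$ at $y$), the functor $i_\A^L$ sends $L^{-n}$ and $\O_X$ to canonically isomorphic objects of $\A$; the difference between these identifications, for sections $s \in H^0(X, L^n)$ that vanish at $y$, produces nilpotent endomorphisms of $A_0$ in $R$. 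The plan is to combine these nilpotent endomorphisms (whose number grows with $\dim X \geq 1$ and the degree $n$) with the semiorthogonality condition $\hom(B_0, A_0[i]) = 0$ to force $R = 0$, contradicting $\operatorname{id}_{A_0} \in R$. Closing this step rigorously is the technically delicate part of the argument, and I expect it to require careful comparison of the global and local structure at $y$ via the restriction $B_0|_{X \setminus \{y\}} \cong \O_{X \setminus \{y\}}$.
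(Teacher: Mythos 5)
Your setup is sound as far as it goes: the reduction to objects supported at a single point, the decomposition $\D^b(X) = \langle \A, \prescript{\perp}{}{\A}\rangle$, and the observation that $\O_X \notin \prescript{\perp}{}{\A}$ (so that $A_0 = i_{\A}^L(\O_X) \neq 0$) all match steps the paper also takes. But the proof is not finished: the entire contradiction lives in your third paragraph, which you yourself flag as ``the technically delicate part,'' and the plan sketched there does not close. Producing nilpotent endomorphisms of $A_0$ from sections of an ample $L$ vanishing at $y$ cannot by itself force $\operatorname{End}(A_0) = 0$ --- a nonzero finite-dimensional local $k$-algebra contains plenty of nilpotents --- and you give no mechanism by which the semiorthogonality $\hom(B_0, A_0[i]) = 0$ interacts with those endomorphisms to produce the desired impossibility. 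As written this is a genuine gap, not a proof.

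The missing idea is more direct, and you already have its ingredients on the table. From the triangle $B_0 \to \O_X \to A_0 \to B_0[1]$, the long exact sequence of cohomology sheaves shows that $\mathcal{H}^0(B_0)$ restricts to $\O_{X \setminus Y}$, so the object $B_0 \in \prescript{\perp}{}{\A}$ has support equal to all of $X$; in particular $y \in \operatorname{supp} B_0$, so Lemma \ref{lem_detect support2} produces a nonzero morphism $B_0[j] \to k(y)$ for some $j \in \Z$. On the other side, for any nonzero $A \in \A$ with $\operatorname{supp}A = \{y\}$, Lemma \ref{lemma_local maps} gives an injection of sheaves $k(y) \hookrightarrow \mathcal{H}^{l}(A)$ into the lowest nonvanishing cohomology sheaf, and minimality of $l$ gives a canonical morphism $\mathcal{H}^{l}(A) \to A[l]$. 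Composing these three arrows yields a nonzero morphism $B_0[j] \to A[l]$ from an object of $\prescript{\perp}{}{\A}$ to an object of $\A$, contradicting semiorthogonality. This is exactly the paper's argument; no ample line bundle or endomorphism algebra is needed, only the two support-detection lemmas you already cite.
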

\begin{proof}
  Suppose that the support of every object in $\A$ is contained in $Y$. Since $\A$ is left admissible, there is a semiorthogonal decomposition $\langle \A ,\prescript{\perp}{}{\A} \rangle = \D^b(X)$ (where $\prescript{\perp}{}{\A}$ is only right admissible in general). Assume by contradiction that $\A \neq 0$, and choose some nonzero object $A\in \A$. Using that the support is isolated, $A \cong \bigoplus_{y_i \in Y} A_i$, where the support of $A_i$ is exactly the closed point $y_i \in Y$. Now choose $B\in \prescript{\perp}{}{\A}$ so that its support is all of $X$. We can guarantee that such an object exists by looking at the distinguished filtration of $\O_X$ induced by $\langle \A ,\prescript{\perp}{}{\A} \rangle$ and the long exact sequence of cohomology sheaves.

  By Lemma \ref{lem_detect support2}, we then see that there are integers $j_i \in \Z$ such that $$\hom_{\D^b(X)}(B[j_i],k(y_i)) \neq 0.$$ Similarly, if $l_i$ is the minimal $l_i \in \Z$ such that $\mathcal{H}^{l_i}(A_i) \neq 0$ (recall the support of $A_i$ is exactly $y_i$), we have, by Lemma \ref{lemma_local maps}, a nonzero morphism $k(y_i) \to \mathcal{H}^{l_i}(A_i)$ realizing an isomorphism between the residue field and a submodule. The composite morphism $B[j_i] \to \mathcal{H}^{l_i}(A_i)$ is nonzero. Now composing with the morphisms $\mathcal{H}^{l_i}(A_i) \to A_i[l_i] \to A[l_i]$, where the first morphism uses that $l_i$ was minimal, we can see that the composition $B[j_i] \to A[l_i]$ is a nonzero map between two objects of $\prescript{\perp}{}{\A}$ and $\A$, which contradicts semiorthogonality. Hence we must have $\A=0$. 
\end{proof}

\begin{lemma}\label{lem_perf support}
  Let $X$ be a projective variety and $F,G \in \perf X$ such that $\operatorname{supp}F \cap \operatorname{supp}G$ contains an isolated point. Then $\R\hom(F,G) \neq 0$.
\end{lemma}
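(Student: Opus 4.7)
The plan is to reduce the computation of $\R\hom(F,G) = \R\Gamma(X, \R\shom(F,G))$ to the stalk at $x$, using perfection to control the local behavior and a Mayer-Vietoris argument to isolate the point $x$ globally.

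First I would set $H := \R\shom(F,G) \simeq F^\vee \otimes^{\L} G$, which is perfect since $F$ and $G$ are. The support of $H$ is contained in $\operatorname{supp} F \cap \operatorname{supp} G$, so $x$ remains isolated in $\operatorname{supp} H$ once it is shown to lie there. To see that $x \in \operatorname{supp} H$, I would pass to stalks: $H_x \simeq F_x^\vee \otimes^{\L}_{\O_{X,x}} G_x$. Tensoring a nonzero perfect complex over a local ring with the residue field gives a nonzero complex (use a minimal free resolution and Nakayama), so both $F_x \otimes^{\L} k(x)$ and $G_x \otimes^{\L} k(x)$ are nonzero bounded complexes of finite-dimensional $k(x)$-vector spaces. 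Since
\[
H_x \otimes^{\L} k(x) \;\simeq\; \bigl(F_x \otimes^{\L} k(x)\bigr)^{\vee} \otimes_{k(x)} \bigl(G_x \otimes^{\L} k(x)\bigr),
\]
Künneth over a field makes the right-hand side nonzero, so $H_x \not\simeq 0$ and some $\mathcal{H}^n(H)_x \neq 0$.

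Next I would exploit the isolation of $x$. Choose an open $U \ni x$ with $U \cap \operatorname{supp} H = \{x\}$ and set $V := X \setminus \{x\}$. Then $\{U,V\}$ is an open cover of $X$ and $H|_{U \cap V} \simeq 0$, because $(U \cap V) \cap \operatorname{supp} H = \emptyset$. The Mayer-Vietoris triangle
\[
\R\Gamma(X,H) \longrightarrow \R\Gamma(U, H|_U) \oplus \R\Gamma(V, H|_V) \longrightarrow \R\Gamma(U \cap V, H|_{U \cap V})
\]
has vanishing third term, so the first arrow is a quasi-isomorphism. On $U$, each cohomology sheaf of $H|_U$ is supported at the closed point $x$ and hence has vanishing higher cohomology on $U$ with global sections equal to its stalk. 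The hypercohomology spectral sequence accordingly degenerates at $E_2$ and yields $\mathbb{H}^n(U, H|_U) \cong \mathcal{H}^n(H)_x$, which by the previous paragraph is nonzero for some $n$. Since $\R\hom(F,G) \cong \R\Gamma(X,H)$, this gives $\R\hom(F,G) \not\simeq 0$.

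The main technical point I anticipate is the local step: verifying that the derived tensor product of two nonzero perfect complexes over the local ring $\O_{X,x}$ remains nonzero. Once this residue-field/Künneth reduction is made precise, the Mayer-Vietoris decomposition and the skyscraper-cohomology computation are routine.
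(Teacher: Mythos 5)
Your proof is correct and follows essentially the same route as the paper's: identify $\R\hom(F,G)$ with $\R\Gamma(X,F^\vee \tens{\L} G)$, split off the contribution at the isolated point (your Mayer--Vietoris triangle is just an explicit form of the paper's direct-sum decomposition), and observe that a complex with zero-dimensional support has nonvanishing hypercohomology. You are in fact more careful than the paper at one step: the residue-field/K\"unneth argument showing that $x$ actually lies in $\operatorname{supp}(F^\vee \tens{\L} G)$ is left implicit there, and it is precisely the point where perfectness of both $F$ and $G$ is needed.
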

\begin{proof}
  This follows as $\R \hom_{\perf X}(F,G) \cong \R \Gamma(X,F^* \otimes G)$. Since the support of $F^* \otimes G$ contains an isolated point, this complex decomposes into a nontrivial direct sum, with the support of one summand exactly the isolated point. Any object with zero-dimensional support has a nonvanishing (hyper)cohomology class given by a nonzero global section of the lowest degree cohomology sheaf, so we are done.
\end{proof}

\begin{corollary}\label{cor_point support sod}
  Let $X$ be a projective variety and assume that $\perf X =\langle \A,\B \rangle$ or $\D^b(X)=\langle \A,\B \rangle$, is a semiorthogonal decomposition, respectively, admissible semiorthogonal decomposition. Suppose that $Y \subset X$ be a finite collection of isolated closed points and that the support of every object in $\A$ or $\B$ is contained in $Y$. Then the semiorthogonal decomposition is trivial.
\end{corollary}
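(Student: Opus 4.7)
The plan is to handle the two cases separately, since the admissible semiorthogonal decomposition of $\D^b(X)$ will reduce almost immediately to Lemma \ref{prop_main}, whereas the $\perf X$ case will need a distinct argument using Lemma \ref{lem_perf support}. In either case, by relabeling the factors if necessary I would assume that $\B$ is the subcategory whose objects all have support contained in $Y$, and aim to prove $\B = 0$. For the $\D^b(X)$ case, admissibility of the decomposition implies that $\B$ is admissible, and in particular left admissible. Since every object of $\B$ has support in the finite set $Y$ of isolated closed points, Lemma \ref{prop_main} applies directly and yields $\B = 0$, so the decomposition is trivial.

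For the case $\perf X = \langle \A, \B \rangle$, I would use the canonical filtration triangle of the structure sheaf induced by the decomposition,
\[
B_0 \to \O_X \to A_0 \to B_0[1], \qquad B_0 \in \B,\ A_0 \in \A.
\]
Since $\operatorname{supp}(B_0) \subset Y$ is a finite set of closed points and $\operatorname{supp}(\O_X) = X$, the long exact sequence of cohomology sheaves forces $\operatorname{supp}(A_0)$ to contain the dense open subset $X \setminus Y$, hence $\operatorname{supp}(A_0) = X$ since supports are closed. (If $\dim X = 0$, then $X = \spec k$ and $\perf X$ is trivially indecomposable.) Now assume for contradiction that $B \in \B$ is nonzero; then $\operatorname{supp}(B) \subset Y$ is a finite set of closed points, and every point of $\operatorname{supp}(B) \cap \operatorname{supp}(A_0) = \operatorname{supp}(B)$ is isolated in the intersection. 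Applying Lemma \ref{lem_perf support} to the pair $(B, A_0)$ then produces a nonzero $\R\hom(B, A_0)$, contradicting the semiorthogonality $\A \subset \B^\perp$. Hence $\B = 0$ and the decomposition is trivial.

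The main obstacle, modest though it is, lies in the $\perf X$ case: one needs to produce an object of $\A$ whose support meets $\operatorname{supp}(B)$ in isolated points. The filtration of $\O_X$ supplies exactly such a witness, but one must verify that $\operatorname{supp}(A_0) = X$ — a minor topological check relying on the irreducibility of $X$ and the density of $X \setminus Y$ when $\dim X \geq 1$.
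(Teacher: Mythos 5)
Your proof is correct and follows the same route as the paper, which simply cites Lemma \ref{prop_main} for the admissible $\D^b(X)$ case and Lemma \ref{lem_perf support} for the $\perf X$ case; you merely fill in the details the paper leaves implicit (producing the full-support witness $A_0$ from the filtration of $\O_X$). The only cosmetic caveat is that ``relabeling the factors'' of a semiorthogonal decomposition is not literally permissible since the two components play asymmetric roles, but this is harmless here: admissibility makes both components left admissible in the $\D^b$ case, and the hypothesis of Lemma \ref{lem_perf support} is symmetric in $F$ and $G$, so the argument goes through whichever component is supported on $Y$.
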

\begin{proof}
   For $\D^b(X)$, since we have assumed that the decomposition is admissible, this follows from Lemma \ref{prop_main}. Now for $\perf X$, this is handled by Lemma \ref{lem_perf support}.
\end{proof}

As we hinted at earlier, these results allow us to also resolve the question of indecomposability when $\operatorname{Bs}|\omega_X|$ is finite.

\begin{corollary}\label{corollary_isolated}
  Let $X$ be a Cohen-Macaulay projective variety such that $\operatorname{Bs}|\omega_X|$ is a finite number of isolated closed points. Then $\perf X$ is indecomposable and $\D^b(X)$ is weakly indecomposable.
\end{corollary}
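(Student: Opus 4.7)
The plan is to combine Theorem \ref{theorem-main} with Corollary \ref{cor_point support sod}, treating the $\perf X$ and $\D^b(X)$ statements separately but with the same broad strategy.

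For the $\perf X$ statement, suppose $\perf X = \langle \A, \B \rangle$ is any semiorthogonal decomposition. Theorem \ref{theorem-main} ensures that after possibly swapping the labels of the two components, every object of $\B$ has support contained in $\operatorname{Bs}|\omega_X|$. By hypothesis this locus is a finite set of isolated closed points, so Corollary \ref{cor_point support sod} (in the $\perf X$ form, which relies on Lemma \ref{lem_perf support}) immediately forces the decomposition to be trivial.

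For the $\D^b(X)$ statement, begin with an admissible semiorthogonal decomposition $\D^b(X) = \langle \A, \B \rangle$. By Lemma \ref{lemma-restrict-sod}, intersecting with $\perf X$ produces an admissible semiorthogonal decomposition $\perf X = \langle \tilde\A, \tilde\B \rangle$, where $\tilde\A = \A \cap \perf X$ and $\tilde\B = \B \cap \perf X$. Applying Theorem \ref{theorem-main} to this restricted decomposition, we may assume (relabeling if necessary) that $\O_{Z_x} \in \tilde\A \subseteq \A$ for every closed point $x \in X \setminus \operatorname{Bs}|\omega_X|$. Semiorthogonality in $\D^b(X)$ then gives $\hom(\O_{Z_x}, B[j]) = 0$ for every $B \in \B$, every such $x$, and every $j \in \Z$. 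By Lemma \ref{lem_detect_support}, this forces $\operatorname{supp} B \subseteq \operatorname{Bs}|\omega_X|$, which is a finite collection of isolated closed points. A second invocation of Corollary \ref{cor_point support sod}, now in its $\D^b$ form (where admissibility enters via Lemma \ref{prop_main}), then forces $\B = 0$, so the decomposition is trivial.

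The main obstacle is bridging Theorem \ref{theorem-main}, which only directly pins down the position of the Koszul zero-cycles inside a semiorthogonal decomposition of $\perf X$, to a statement about arbitrary objects of $\D^b(X)$. The reduction via Lemma \ref{lemma-restrict-sod} is what lets us first place the Koszul zero-cycles at points $x \notin \operatorname{Bs}|\omega_X|$ into one side of the $\D^b(X)$-decomposition, after which Lemma \ref{lem_detect_support} upgrades this to a support constraint on the opposite side for all bounded complexes, not just perfect ones. Once this support is shown to lie in a finite set of isolated points, the finiteness machinery of Lemmas \ref{prop_main} and \ref{lem_perf support} packaged in Corollary \ref{cor_point support sod} closes out both cases.
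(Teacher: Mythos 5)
Your proposal follows the paper's own proof essentially verbatim: Theorem \ref{theorem-main} places the Koszul zero-cycles at points outside $\operatorname{Bs}|\omega_X|$ into one component, the support of the other component is then confined to the finite set $\operatorname{Bs}|\omega_X|$, and Corollary \ref{cor_point support sod} (via Lemma \ref{lem_perf support} for $\perf X$ and Lemma \ref{prop_main} for the admissible $\D^b(X)$ case, after restricting with Lemma \ref{lemma-restrict-sod}) kills that component. One small slip: with the paper's convention that $\T=\langle \A,\B\rangle$ means $\hom(\B,\A)=0$, placing $\O_{Z_x}\in\A$ gives you $\hom(B,\O_{Z_x}[i])=0$ for $B\in\B$, not $\hom(\O_{Z_x},B[j])=0$ as you wrote; this is harmless because Lemma \ref{lem_detect_support} detects $x\in\operatorname{supp}B$ from either hom-direction, so the conclusion $\operatorname{supp}B\subseteq\operatorname{Bs}|\omega_X|$ stands.
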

\begin{proof}
  Suppose we have a decomposition $\perf X = \langle \tilde\A,\tilde\B \rangle$. By Theorem \ref{theorem-main}, either $\tilde \A$ or $\tilde \B$ contains all Koszul zero-cycles $\O_{Z_x}$ for $x \notin \operatorname{Bs}|\omega_X|$. Without loss of generality we can assume its $\tilde\A$.  Hence $\tilde \B$ satisfies the hypotheses in Corollary \ref{cor_point support sod}, so the decomposition of $\perf X$ is trivial. We see that this category is indecomposable. 
  
  Now suppose we have an admissible decomposition $\D^b(X) = \langle \A,\B\rangle$. This, by Lemma \ref{lemma-restrict-sod}, induces $\perf X = \langle \tilde\A,\tilde\B \rangle$, where $\tilde \A = \perf X \cap \A$ and $\tilde \B = \perf X \cap \B$. The above paragraph implies (again, without loss of generality) that $\A \subset \D^b(X)$ also contains all Koszul zero-cycles $\O_{Z_x}$ for $x \notin \operatorname{Bs}|\omega_X|$. So $\B$ satisfies the hypotheses in Corollary \ref{cor_point support sod}, hence $\B =0$. Since this decomposition was arbitrary, we are done.
\end{proof}

The following corollary relates these observations to curves. Note that it is known that smooth and Gorenstein curves of arithmetic genus larger then one have empty canonical base locus (\cite[Theorem 1.6]{Hartshorne-Generalized_divisors} and \cite[Theorem D]{Catanese-Pluricanonical_gorenstein_curves}). So Corollary \ref{corollary-main} resolves this question for Gorenstein curves. In the Cohen-Macaulay case however, we do not know if the canonical base locus is empty, but we can still conclude the same result.

\begin{corollary}\label{corollary-curves}
  Let $C$ be a projective curve of arithmetic genus at least one. Then $\perf C$ is indecomposable and $\D^b(C)$ is weakly indecomposable.  
\end{corollary}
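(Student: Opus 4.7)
The plan is to reduce to Corollary \ref{corollary_isolated} by verifying that $\operatorname{Bs}|\omega_C|$ is a finite set of isolated closed points of $C$.

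First I would note that since $C$ is a reduced curve, each local ring $\O_{C,x}$ is a one-dimensional reduced noetherian local ring, hence Cohen-Macaulay (as recorded in the preliminaries). Thus $C$ is a Cohen-Macaulay projective variety, and its dualizing sheaf $\omega_C$ is reflexive of generic rank one; in particular $\omega_C$ is torsion-free. The earlier lemma on torsion-free sheaves on curves then applies: for every $s \in H^0(C,\omega_C)$ the set $D(s)$ is open, so $V(s) = C \setminus D(s)$ is closed. The arithmetic genus hypothesis now enters: $p_a(C) = h^0(C,\omega_C) \geq 1$ supplies a nonzero section $s_0$. Since $\omega_C$ is torsion-free on the irreducible curve $C$, this $s_0$ is generically nonzero, so by Remark \ref{rem_denseopen} the set $D(s_0)$ is dense; consequently $V(s_0)$ is a proper closed subset of a one-dimensional irreducible variety, hence a finite set of closed points. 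It then follows that
$$
\operatorname{Bs}|\omega_C| \;=\; \bigcap_{s \in H^0(C,\omega_C)} \overline{V(s)} \;\subset\; \overline{V(s_0)} \;=\; V(s_0)
$$
is a finite set, and as closed points of a curve these are automatically isolated in $C$.

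With this reduction in hand, both conclusions follow immediately from Corollary \ref{corollary_isolated}. There is no serious obstacle here: the heavy lifting has been done in Theorem \ref{theorem-main} and in Corollary \ref{corollary_isolated}. The role of the positive genus hypothesis is simply to guarantee at least one nonzero global section of $\omega_C$, which rules out the degenerate possibility $\operatorname{Bs}|\omega_C| = C$, while the one-dimensional curve hypothesis converts the a priori only constructible locus $V(s_0)$ into an honest finite closed subset.
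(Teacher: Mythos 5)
Your proof is correct, and it reaches the key reduction --- that $\operatorname{Bs}|\omega_C|$ is a finite set of closed points, so that Corollary \ref{corollary_isolated} applies --- by a genuinely different route than the paper. You use only the existence of a single nonzero section: $h^0(C,\omega_C)=p_a(C)\geq 1$ gives $s_0\neq 0$, torsion-freeness of $\omega_C$ makes $D(s_0)$ a nonempty (hence dense) open subset of the irreducible curve $C$, and so $\operatorname{Bs}|\omega_C|\subset V(s_0)$ is a proper closed, hence finite, subset. The paper instead runs a Riemann--Roch computation at an arbitrary regular point $p$, showing that $p\in\operatorname{Bs}|\omega_C|$ would force $h^0(C,\O_C(p))=2$ and hence $p_a(C)=0$; this yields the sharper conclusion $\operatorname{Bs}|\omega_C|\cap C_{reg}=\emptyset$, i.e.\ the base locus sits inside the (finite) singular locus. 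Your argument is more elementary --- it avoids Riemann--Roch entirely and isolates exactly what the genus hypothesis is needed for (a nonzero section, ruling out $\operatorname{Bs}|\omega_C|=C$) --- while the paper's argument gives the extra geometric information that regular points are never base points. Either suffices for the corollary. One small wording point: the finitely many points of $V(s_0)$ are of course not isolated \emph{in $C$}; what matters (and what you in fact have) is that they form a finite, hence discrete, closed subset, which is the sense in which Lemma \ref{prop_main} and Corollary \ref{cor_point_support_sod} use the word ``isolated.''
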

\begin{proof}
  First note that any projective curve is Cohen-Macaulay. By Corollary \ref{corollary_isolated}, it is enough to see that the base locus of the dualizing sheaf $\omega_C$ consists of isolated closed points.

  Let $p \in C$ be a regular point; given the exact sequence $$0 \to \omega_C(-p) \to \omega_C \to k(p) \to 0,$$ we see that $p$ is a base point of $\omega_C$ if and only if the map $$H^1(\omega_C(-p)) \to H^1(\omega_C) \to 0$$ is not an isomorphism, equivalently $h^1(\omega_C(-p)) \geq 2$. We claim that in fact $h^1(\omega_C(-p)) = 2$.  Indeed, since $p$ is regular, Riemann-Roch informs us that $h^0(p) - h^1(p) = 2-p_a(C)$. Further, from the long exact sequence of cohomology arising from the short exact sequence above, we have $h^1(p) = h^0(\omega_C(-p)) \leq p_a(C)$. Rearranging, we have $$h^0(p) - 2 = h^0(\omega_C(-p)) - p_a(C),$$ where the left hand side is nonnegative, and the right hand side is nonpositive, hence they must both be equal, and $h^0(p) = 2$.
  
  Thus $p$ is a base point of $\omega_C$ if and only if $h^0(C,\O_C(p)) = 2$. However this latter equality holds only if $p_a(C) =0$. This shows that if $C_{reg}$ is the locus of regular points, $C_{reg} \cap \operatorname{Bs}|\omega_C|= \emptyset$, in particular, $\operatorname{Bs}|\omega_C|$ is isolated.
\end{proof}

\bibliographystyle{alpha} 
\bibliography{references}

\end{document}